\documentclass[12pt]{amsart}

\usepackage{lmodern}
\usepackage[fontsize=13pt]{scrextend}
\usepackage{fullpage}
\usepackage{mathtools}
\usepackage{amssymb,amsmath,amsthm,amscd,mathrsfs,graphicx}
\usepackage[dvipsnames]{xcolor}
\usepackage{bm}
\usepackage{hyperref}
\usepackage{dsfont}
\usepackage{enumerate}
\usepackage{epsfig}
\usepackage{float, graphicx}
\usepackage{latexsym, amsxtra}
\usepackage{pdfpages}
\usepackage{multicol}
\usepackage[normalem]{ulem}
\usepackage{psfrag}
\usepackage{stmaryrd}
\usepackage{tikz}
\usepackage[T1]{fontenc}
\usepackage{url}
\usepackage{verbatim}
\usepackage{xy}
\usepackage{indentfirst}
\usepackage{tikz-cd}
\usepackage{url}

\makeatletter
\def\thm@space@setup{%
	\thm@preskip=2ex \thm@postskip=2ex
}
\makeatother

\hypersetup{hidelinks}

\numberwithin{equation}{section}
\theoremstyle{plain}
\newtheorem{thm}{Theorem~}[section] 
\newtheorem{lem}[thm]{Lemma~}

\newtheorem{prop}[thm]{Proposition~}

\theoremstyle{remark}
\newtheorem{rmk}[thm]{Remark~}

\theoremstyle{definition}
\newtheorem{defn}[thm]{Definition~}

\newcommand{\KK}{\mathbb{K}}
\newcommand{\ZZ}{\mathbb{Z}}

\newcommand{\PP}{\mathbb{P}}

\newcommand{\CC}{\mathbb{C}}

\newcommand\PGL{\mathrm{PGL}}

\newcommand\Hom{\mathrm{Hom}}

\newcommand\I{\mathrm{I}}

\newcommand\SL{\mathrm{SL}}

\newcommand\diag{\mathrm{diag}}
\newcommand\GL{\mathrm{GL}}

\newcommand\ord{\mathrm{ord}}

\newcommand\Lin{\mathrm{Lin}}
\newcommand{\Map}{\mathrm{Map}}

\DeclareMathOperator{\Aut}{Aut}
\DeclareMathOperator{\res}{res}
\DeclareMathOperator{\corr}{cor}

\title{Sylow Criteria for Liftability of Automorphism Groups
	of Smooth Hypersurfaces}

\author[B. Xie]{Baiting Xie}
\address{Tsinghua University, China}
\email{xbt23@mails.tsinghua.edu.cn}

\author[Z. Zheng]{Zhiwei Zheng}
\address{Tsinghua University, China}
\email{zhengzhiwei@mail.tsinghua.edu.cn}
\date{}

\begin{document}
	\bibliographystyle{amsalpha}
	
		\begin{abstract} 
			In this paper, we first establish Sylow criteria for liftability of finite subgroups of the projective linear groups over arbitrary field. We also obtain Sylow criteria for $F$-liftability: if $F$ is a nonsingular polynomial of degree $d$ in $N$ variables over a field of characteristic zero, then a finite subgroup $G$ of $\Lin(F)$ is $F$-liftable if and only if for every prime $p$ dividing $\gcd(|G|,N,d)$, there exists a Sylow $p$-subgroup of $G$ that is $F$-liftable. Our proof combines restriction and corestriction in group cohomology with the reduction-to-Klein method.
		\end{abstract}
	
	\maketitle
	

	\section{Introduction}
\label{section: introduction}
Let $\KK$ be an arbitrary field, $V$ be an $N$-dimensional $\KK$-vector space, and
$F\in S^d(V^*)$ be a nonzero homogeneous form of degree $d\ge 1$.
We write $\Lin(F)$ for the group of projective linear transformations
preserving $F$ up to a nonzero scalar. The zero locus
$X=V(F)\subseteq\PP(V)$ is a 
hypersurface preserved by $\Lin(F)$. If $\KK$ is an algebraically closed field of characteristic zero, $N \geq 4$, $d \geq 3$, $(N,d) \neq (4,4)$, and $F$ is nonsingular, the theorem of Matsumura and
Monsky \cite{matsumura1963automorphisms} identifies $\Aut(X)$ with
$\Lin(F)$ and shows that this group is finite. Determining such groups remains difficult even for fixed $(d,N)$; see, for
example, \cite{oguiso2019quintic,wei2020automorphism,laza2022automorphisms,yang2024automorphism} when $\KK=\CC$.

We study a basic lifting problem for finite projective linear groups. A
finite subgroup $G\subseteq\PGL(V)$ is \emph{liftable} if it is the
isomorphic image of a finite subgroup of $\GL(V)$. When
$G\subseteq\Lin(F)$, we ask the stronger question whether such a lifting can be
chosen to preserve $F$ itself; in this case $G$ is called
\emph{$F$-liftable}. These notions are recalled precisely in
\S\ref{section: pre}. Over $\CC$, Gonz\'{a}lez-Aguilera, Liendo,
and Montero proved the corresponding $F$-liftability result when
$\gcd(N,d)=1$ \cite{gonzalez2020lift}.

Next we state our main results:
\begin{thm}
	\label{main theorem: liftability and Sylow subgroups}
	Let $\KK$ be an arbitrary field, let $V$ be an $N$-dimensional
	$\KK$-vector space, and let $G$ be a finite subgroup of $\PGL(V)$.
	Then the following statements hold.
	\begin{enumerate}
		\item The group $G$ is liftable if and only if, for every prime
		$p$ dividing $\gcd(|G|,N)$, it has a liftable Sylow
		$p$-subgroup.
		\item Let $F\in S^d(V^*)$ be a nonzero homogeneous form of degree
		$d\geq1$. If $G\subseteq\Lin(F)$, then $G$ is liftable if and only
		if, for every prime $p$ dividing $\gcd(|G|,N,d)$, it has a
		liftable Sylow $p$-subgroup.
	\end{enumerate}
\end{thm}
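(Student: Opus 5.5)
The plan is to encode liftability as the vanishing of a cohomology class and then use restriction–corestriction to localize at primes. Consider the central extension
\[1\to \KK^\times \to \GL(V)\to \PGL(V)\to 1,\]
and pull it back along $G\hookrightarrow\PGL(V)$ to get $1\to\KK^\times\to \widetilde G\to G\to 1$. A finite lift of $G$ is a subgroup $H\subseteq\widetilde G$ mapping isomorphically onto $G$, i.e.\ a splitting. So $G$ is liftable iff the class $c_G\in H^2(G,\KK^\times)$ (trivial action) vanishes. The class is natural: for a subgroup $P\subseteq G$ the class of $P$ is $\res^G_P c_G$. Hence a Sylow $p$-subgroup $P$ is liftable iff $\res^G_P c_G=0$.

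For part (1), I first show that $N\cdot c_G=0$. Taking determinants, $\det:\widetilde G\to\KK^\times$ restricts on $\KK^\times$ to $\lambda\mapsto\lambda^N$. Pushing the extension forward along $\lambda\mapsto\lambda^N$ therefore gives an extension that splits (by $g\mapsto(g,\det g)$ modulo the appropriate quotient). So $N c_G=0$. Also $|G|c_G=0$, since $\corr\circ\res$ from the trivial subgroup is multiplication by $|G|$. Thus $c_G$ is killed by $m=\gcd(|G|,N)$.

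Now decompose $c_G=\sum_p c_p$ into $p$-primary parts. Only primes $p\mid m$ occur. For such $p$ with Sylow $P$, $\corr^P_G\res^G_P$ is multiplication by $[G:P]$, which is prime to $p$, so $\res^G_P$ is injective on the $p$-primary part. For $q\neq p$, $\res^G_P c_q$ is killed by a power of $q$ and by $|P|$, hence is $0$. Therefore $\res^G_P c_G=0$ forces $c_p=0$. Doing this for every $p\mid m$ gives $c_G=0$. The converse is immediate because subgroups of liftable groups are liftable.

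For part (2), the same argument applies once I show $d\cdot c_G=0$ when $G\subseteq\Lin(F)$. Each $\tilde g\in\widetilde G$ satisfies $F\circ\tilde g^{-1}=\chi(\tilde g)F$ for a homomorphism $\chi:\widetilde G\to\KK^\times$. On scalars, $\chi(\lambda)=\lambda^{-d}$. As before, this splits the pushforward of the extension along $\lambda\mapsto\lambda^{-d}$, so $d c_G=0$. Combined with $N$ and $|G|$, $c_G$ is killed by $\gcd(|G|,N,d)$, and the localization argument of part (1) finishes the proof.

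The main point requiring care is the pushout step: verifying that a homomorphism $\phi:\widetilde G\to\KK^\times$ restricting to $\lambda\mapsto\lambda^k$ on the kernel forces $k\,c_G=0$. I would check this explicitly with cocycles. Choose a set-theoretic section $s$ with cocycle $c(g,h)=s(g)s(h)s(gh)^{-1}$. Then
\[c(g,h)^k=\phi(s(g))\,\phi(s(h))\,\phi(s(gh))^{-1},\]
which is a coboundary. This step needs no assumption on $\KK$, and no finiteness of $\KK^\times$, since the restriction–corestriction identities hold for any trivial $G$-module.
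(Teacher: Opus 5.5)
Your proof is correct and uses the same ingredients as the paper: the obstruction class in $H^2(G,\KK^\times)$ killed by $N$ (via $\det$), by $d$ (via the semi-invariance character of $F$) and by $|G|$, together with $\corr^G_P\res^G_P=[G:P]$. The only difference is bookkeeping. The paper applies the $N$- and $d$-torsion bounds to each Sylow $p$-subgroup with $p\nmid N$ (resp.\ $p\nmid\gcd(N,d)$) to show it is liftable, then combines all Sylow subgroups by B\'ezout on the indices $[G:G_p]$; you apply the bounds to the class of $G$ itself and argue prime-by-prime through its primary decomposition.
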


\begin{thm}
	\label{theorem: F-liftability and Sylow subgroups}
	Let $\KK$ be an arbitrary field, let $V$ be an $N$-dimensional $\KK$-vector space, and let $F\in S^{d}(V^{*})$ be a nonzero homogeneous form of degree $d\geq 1$. If $G$ is a finite subgroup of $\Lin(F)$, then $G$ is $F$-liftable if and only if, for every prime $p$ dividing $|G|$, the group $G$ has an $F$-liftable Sylow $p$-subgroup.
\end{thm}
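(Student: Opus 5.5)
Throughout, $\pi\colon \GL(V)\to\PGL(V)$ is the projection. I would recast $F$-liftability as the splitting of a central extension by a finite cyclic group, and then use restriction and corestriction in group cohomology. Set
\[
\widetilde{G}_F=\{g\in\GL(V): \pi(g)\in G,\ g^*F=F\}.
\]
Every element of $G$ preserves $F$ up to a scalar. If $g^*F=\lambda F$, then $(cg)^*F=c^d\lambda F$, so $\pi$ maps $\widetilde G_F$ onto $G$ precisely when every scalar $\lambda$ that occurs is a $d$-th power in $\KK^\times$. The kernel of $\pi|_{\widetilde G_F}$ consists of the scalars $c$ with $c^d=1$, namely $\mu_d(\KK)$, which is finite cyclic of some order $m$ dividing $d$.

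\textbf{Step 1 (surjectivity).} This is the point I would check first. If $G$ is $F$-liftable, then $\pi(\widetilde G_F)=G$. Conversely, if every Sylow $p$-subgroup $P$ is $F$-liftable, then $\pi(\widetilde G_F)\supseteq P$ for every $P$. Since $\pi(\widetilde G_F)$ is a subgroup of $G$ containing a Sylow $p$-subgroup for every $p$, it has order divisible by $|G|$ and hence equals $G$. We therefore get a central extension
\[
1\to \mu_d(\KK)\to \widetilde G_F\to G\to 1
\]
with class $\alpha\in H^2(G,\mu_d(\KK))$, where the action is trivial. The group $G$ is $F$-liftable exactly when this extension splits, i.e.\ when $\alpha=0$. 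For a subgroup $H\le G$, the preimage of $H$ is the analogous extension for $H$, whose class is $\res^G_H\alpha$. So $H$ is $F$-liftable if and only if $\res^G_H\alpha=0$.

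\textbf{Step 2 (local-to-global).} Let $A=\mu_d(\KK)$. The group $H^2(G,A)$ is annihilated by $m=|A|$ and by $|G|$, so it is a finite abelian group that decomposes into $p$-primary parts. For a Sylow $p$-subgroup $P$, the composite $\corr^P_G\circ\res^G_P$ is multiplication by $[G:P]$, which is prime to $p$. Hence $\res^G_P$ is injective on the $p$-primary component. If $\res^G_P\alpha=0$ for a chosen Sylow $P$ for each $p\mid |G|$, then every $p$-primary component of $\alpha$ vanishes: write $\alpha=\sum_p\alpha_p$, and note that $\res^G_P\alpha_{p'}$ for $p'\neq p$ is killed by both a power of $p'$ and $|P|$, so it is zero. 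Thus $\alpha=0$. Primes $p$ that divide $|G|$ but not $m$ contribute nothing, which is consistent with the statement.

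The converse direction is immediate: a subgroup of an $F$-liftable group is $F$-liftable, since we may restrict the lifting homomorphism.

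\textbf{Main obstacle.} The delicate part is Step 1 over an arbitrary field. One has to make sure that the extension with kernel $\mu_d(\KK)$ is the right object even when $\KK$ is not algebraically closed, so that scalars need not have $d$-th roots. One also has to check that $F$-liftability of $G$ is equivalent to a section of $\widetilde G_F\to G$, without first assuming that $\widetilde G_F$ surjects onto $G$. I would handle this by defining $\widetilde G_F$ as above and proving its surjectivity from the Sylow hypothesis before invoking any cohomology. In characteristic $p$, the group $\mu_d(\KK)$ has order prime to $p$, but the argument does not depend on this.
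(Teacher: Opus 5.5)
Your proof is correct, but it is organized differently from the paper's.

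The paper works in two layers. It first applies Theorem~\ref{theorem: liftability and Sylow subgroups}, which kills $o_G\in H^2(G,\KK^\times)$ by restriction and corestriction, to obtain some lifting $\iota$. It then records the failure of $\iota$ to preserve $F$ as a character $\gamma\in\Hom(G,\KK^\times)$. Comparing $\iota$ with an $F$-lifting of $G_p$ shows that $\gamma|_{G_p}$ is a $d$-th power. Corestriction in degree one (Lemma~\ref{lemma: d-divisibility and Sylow subgroup}) then gives $\gamma=\delta^d$ on all of $G$, so $\delta^{-1}\iota$ is an $F$-lifting.

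You instead build the single finite central extension $1\to\mu_d(\KK)\to\widetilde G_F\to G\to 1$. You get surjectivity from the fact that $\pi(\widetilde G_F)$ contains a Sylow $p$-subgroup for each $p$; one per prime suffices, which is exactly what the hypothesis provides. You then run restriction--corestriction once, in $H^2(G,\mu_d(\KK))$.

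The two obstructions are the same object read through $1\to\mu_d(\KK)\to\KK^\times\xrightarrow{(\cdot)^d}(\KK^\times)^d\to 1$. When $\widetilde G_F\to G$ is onto, your class $\alpha$ maps to $o_G$. Once $o_G$ vanishes, $\alpha$ is, up to sign, the image under the connecting map of the paper's character $\gamma$, which then takes values in $(\KK^\times)^d$.

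What your route buys:
\begin{itemize}
\item It is more compact and needs no separate degree-one lemma.
\item It shows that the cohomological step only involves primes dividing $\gcd(|G|,|\mu_d(\KK)|)$. The other primes enter only through surjectivity.
\end{itemize}

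What the paper's route buys:
\begin{itemize}
\item It reuses Theorem~\ref{theorem: liftability and Sylow subgroups}, which it needs anyway for Theorem~\ref{main theorem: liftability and Sylow subgroups}.
\item It constructs the $F$-lifting explicitly by rescaling a given lifting.
\item It never has to show in advance that every element of $G$ has an $F$-preserving preimage.
\end{itemize}

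Your $\widetilde G_F$ is precisely the $F$-preserving cover the paper later uses, for $p$-groups only, in Lemma~\ref{lemma: construction of finite cover for coprime cases}.
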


\begin{thm}
	\label{main theorem: F-liftability and Sylow subgroups}
		Let $\KK$ be an arbitrary field of characteristic zero, let $V$ be an $N$-dimensional $\KK$-vector space, and let $F\in S^{d}(V^{*})$ be a nonsingular homogeneous form of degree $d\geq 1$. If $G$ is a finite subgroup of $\Lin(F)$, then $G$ is $F$-liftable if and only if, for every prime $p$ dividing $\gcd(|G|,N,d)$, the group $G$ has an $F$-liftable Sylow $p$-subgroup.
\end{thm}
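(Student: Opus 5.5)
The plan is to reduce Theorem~\ref{main theorem: F-liftability and Sylow subgroups} to Theorem~\ref{theorem: F-liftability and Sylow subgroups}. By that theorem, $G$ is $F$-liftable if and only if every prime $p\mid |G|$ admits an $F$-liftable Sylow $p$-subgroup. So it suffices to show that, when $\operatorname{char}\KK=0$ and $F$ is nonsingular, a Sylow $p$-subgroup $P$ of $G$ is \emph{automatically} $F$-liftable whenever $p\nmid\gcd(N,d)$. The "only if" direction is immediate: a lift of $G$ preserving $F$ restricts to lifts of its Sylow subgroups.

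\textbf{Setup.} For a finite $H\subseteq\Lin(F)$, write $H_F=\{A\in\GL(V): A^*F=F,\ [A]\in H\}$. The kernel of $H_F\to H$ consists of the scalars $\zeta$ with $\zeta^d=1$, namely $\mu_d(\KK)$. I expect $H_F\to H$ to be surjective under the paper's conventions for $\Lin(F)$ and $F$-liftability; if not, this must be arranged first. Granting surjectivity, $H_F$ is a finite central extension
\[
1\to\mu_d(\KK)\to H_F\to H\to 1,
\]
and $H$ is $F$-liftable exactly when this extension splits. Since $\mu_d(\KK)$ is cyclic, we may split it as $\mu_d(\KK)=\mu_{(p)}\times\mu_{(p')}$, with $\mu_{(p)}$ its $p$-part. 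For $H=P$ a $p$-group, the extension class has trivial $\mu_{(p')}$-component, because $H^2(P,\mu_{(p')})=0$ as the orders are coprime. Hence it suffices to split the $p$-part. Concretely, we want a homomorphism $r\colon P_F\to\mu_{(p)}$ whose restriction to $\mu_{(p)}$ is an automorphism; then $\ker r$ maps isomorphically onto $P$ modulo the $p'$-part, which is handled as above, for instance by Schur--Zassenhaus.

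\textbf{Case $p\nmid d$.} Then $\mu_{(p)}=1$ and there is nothing to prove.

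\textbf{Case $p\mid d$, $p\nmid N$.} This is where nonsingularity and characteristic zero enter, via the discriminant. For $A\in\GL(V)$ we have
\[
\operatorname{disc}(A^*F)=\det(A)^{d(d-1)^{N-1}}\operatorname{disc}(F),
\]
and $\operatorname{disc}(F)\neq0$ by nonsingularity. So every $A\in P_F$ satisfies $\det(A)^{d(d-1)^{N-1}}=1$. Because $p\mid d$, we have $p\nmid d-1$, so the $p$-part of $\det(A)$ has order dividing the $p$-part $d_p$ of $d$. Thus $\det(A)_{(p)}\in\mu_{d_p}(\KK)=\mu_{(p)}$, where $x\mapsto x_{(p)}$ denotes projection of the finite cyclic group $\det(P_F)\mu_{(p)}$ onto its $p$-part; this projection is a homomorphism. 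On scalars, $\det(\zeta)=\zeta^N$, and $\zeta\mapsto\zeta^N$ is an automorphism of $\mu_{(p)}$ since $p\nmid N$. Choose $m$ with $mN\equiv1\pmod{|\mu_{(p)}|}$ and define $r(A)=\det(A)_{(p)}^{\,m}$. Then $r$ is a homomorphism $P_F\to\mu_{(p)}$ restricting to the identity on $\mu_{(p)}$, so the extension splits and $P$ is $F$-liftable. Combining the two cases with Theorem~\ref{theorem: F-liftability and Sylow subgroups} gives the result.

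\textbf{Main obstacle.} The key step is establishing the discriminant identity and its consequence that $\det(P_F)$ has no $p$-torsion beyond $\mu_{d_p}$. Without it, $\det$ only shows the pushed-out class vanishes in $H^2(P,\KK^*)$, which does not control the class in $H^2(P,\mu_{(p)})$. I would prove the identity via the standard equivariance of the discriminant under $\GL_N$: it is an invariant of weight $d(d-1)^{N-1}$ that is nonvanishing exactly on nonsingular forms. Characteristic zero ensures that "nonsingular" matches nonvanishing of the discriminant and that this weight computation is valid. A secondary point to check against the paper's definitions is the surjectivity of $P_F\to P$ over a non-closed field.
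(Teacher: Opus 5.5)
\textbf{The gap: surjectivity of $P_F\to P$ over non-closed fields.} This step is not a convention, and it can fail. An element $\pi(A)\in\Lin(F)$ with $F\circ A=\lambda F$ has an $F$-preserving preimage in $\GL(V)$ exactly when $\lambda\in(\KK^\times)^d$. For instance, take $F=x^2+y^2$ over $\mathbb{Q}$ and $A=\begin{pmatrix}1&-1\\1&1\end{pmatrix}$. Then $F\circ A=2F$ and $\pi(A)$ has order $4$, yet no $tA$ with $t\in\mathbb{Q}^\times$ preserves $F$. Here $p=2\mid\gcd(N,d)$, so the theorem is not contradicted, but the example shows the step has content.

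Without surjectivity, your splitting of $1\to\mu_d(\KK)\to P_F\to\pi(P_F)\to 1$ only gives an $F$-lifting of the possibly proper subgroup $\pi(P_F)$. So as written, your argument proves the theorem only for algebraically closed $\KK$, where rescaling is always possible. In the paper, this descent is exactly the second half of Propositions~\ref{proposition: F-liftability when order coprime to d} and~\ref{proposition: F-liftability when p mid d and p nmid N}. There Lemma~\ref{lemma: liftability when gcd order N d=1} supplies a $\KK$-rational lift, which is then compared with the lift over $\overline{\KK}$.

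\textbf{How to close it with your own identity.} Let $g\in P$ have order $p^r$. Take any $A\in\GL(V)$ with $\pi(A)=g$, and write $F\circ A=\lambda F$ and $A^{p^r}=c\,\I_V$ with $\lambda,c\in\KK^\times$. Then $\lambda^{p^r}=c^d$. Since $\operatorname{disc}$ is homogeneous of degree $N(d-1)^{N-1}$ in the coefficients, your identity also gives $\lambda^{N(d-1)^{N-1}}=\det(A)^{d(d-1)^{N-1}}$.
\begin{itemize}
\item If $p\nmid d$, write $ap^r+bd=1$; then $\lambda=(c^a\lambda^b)^d$.
\item If $p\mid d$ and $p\nmid N$, write $aN(d-1)^{N-1}+bp^r=1$; then $\lambda=\bigl(\det(A)^{a(d-1)^{N-1}}c^b\bigr)^d$.
\end{itemize}
In both cases $\lambda$ is a $d$-th power in $\KK$, so rescaling $A$ gives an $F$-preserving preimage over $\KK$. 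Your map $r$ together with Schur--Zassenhaus then finishes the proof.

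\textbf{Comparison with the paper.} With this addition your proof is correct, and it takes a genuinely different route. The paper uses the reduction-to-Klein method (Lemma~\ref{lemma: reduction to klein}, Lemma~\ref{lemma: aut of klein}, Remark~\ref{remark: diagonal automorphisms of Klein polynomial}) for two things. It proves $\det(\varphi)^d=1$ for finite-order $F$-preserving $\varphi$ of order prime to $d-1$ (Proposition~\ref{proposition: determinant equality}). It also builds an $F$-lifting of order $p^r$ in $\SL$ over $\overline{\KK}$. You instead obtain the stronger constraint $\det(A)^{d(d-1)^{N-1}}=1$ for every $F$-preserving $A$ directly from the $\GL$-equivariance of the discriminant. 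You also replace the paper's $\SL(V)$ coprime cover by the kernel of the $p$-part-of-determinant map $r$. Your route is shorter and more conceptual, but it rests on the theory of resultants: nonvanishing exactly on nonsingular forms in characteristic zero, and the weight $d(d-1)^{N-1}$. The paper's route is self-contained.
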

For $F$-liftability, the condition in Theorem \ref{main theorem: F-liftability and Sylow subgroups} is empty whenever $\gcd(|G|,N,d)=1$.
Specializing to the case $\KK=\CC$ and $\gcd(N,d)=1$, the preceding theorem provides an alternative proof of the liftability
assertion in \cite[Theorem 3.5]{gonzalez2020lift}.

The proof has two main ingredients. First, the obstruction to lifting a
finite subgroup of $\PGL(V)$ is a class in $H^2(G,\KK^\times)$.
Restriction and corestriction reduce its vanishing to Sylow subgroups.
Once a lifting is fixed, the additional obstruction to preserving $F$
is encoded by a character of $G$, and the same formalism gives
the corresponding Sylow reduction for $F$-liftability. Second, for the
remaining Sylow subgroups we construct finite preimages whose
kernels have order coprime to $p$; taking Sylow $p$-subgroups of these
preimages then gives the required liftings and $F$-liftings. 
The main single-element input is obtained from the reduction-to-Klein
method: using \cite[Theorem 4.2]{zheng2022abelian}, we replace a
semi-invariant nonsingular form by a K-pure polynomial with the same
diagonal action and then use the explicit diagonal automorphism groups
of its Klein components.

\medskip
\noindent\emph{Acknowledgements.}
We thank Álvaro Liendo for his interest in this work and for his helpful
comments on an early draft.


\section{Preliminary Results}
\label{section: pre}
In this section we introduce some preliminary terminology and results that will be used later. The results are not new, but are organized in a way that suits our exposition.

\subsection{Automorphism Groups of Homogeneous Polynomials}
We first introduce the definitions of liftability and $F$-liftability following \cite[\S 4]{oguiso2019quintic}.
For an $N$-dimensional $\KK$-vector space $V$, we denote its general linear group and projective linear group by $\GL(V)$ and $\PGL(V)$, respectively. We denote by $\pi\colon\GL(V) \rightarrow \PGL(V)$ the canonical projection map. 

\begin{defn}
	\label{definition: liftability}
	Let $G$ be a subgroup of $\PGL(V)$. If there exists a subgroup $\widetilde{G} \subseteq \GL(V)$ such that the restriction of $\pi$ to $\widetilde{G}$ induces an isomorphism $\pi|_{\widetilde{G}}\colon\widetilde{G} \rightarrow G$, we say that $G$ is liftable and $\widetilde{G}$ is a lifting of $G$. In particular, for any $g \in \PGL(V)$, we say that $g$ is liftable if the subgroup $\langle g\rangle$ generated by $g$ is liftable. In this case, $\langle g\rangle$ has a lifting $\widetilde{G}$ and we say $\widetilde{g}=(\pi|_{\widetilde{G}})^{-1}(g)$ is a lifting of $g$. It follows that the liftings of $g$ are exactly the elements in $\pi^{-1}(g)$ with the same order as $g$. 
\end{defn}

For any nonzero homogeneous form $F \in S^{d}(V^{*})$ of degree $d \geq 1$, the linear automorphism group $\Lin(F)$ of $ F $ is defined as
\begin{equation*}
	\Lin(F)=\{\pi(\varphi)\mid \varphi\in\GL(V),\ \exists\lambda\in\KK^\times,\ F\circ\varphi = \lambda F\}.
\end{equation*}

Now we are ready to define $F$-liftability.
\begin{defn}
	\label{definition: F-liftability}
	Let $F \in S^{d}(V^{*})$ be a nonzero homogeneous form of degree $d \geq 1$. 
	\begin{enumerate}
		\item We say that a single element $ \varphi \in \GL(V) $ is $ F $-preserving if $F\circ \varphi = F$. We say that a subgroup $ H $ of $ \GL(V) $ is $ F $-preserving if all elements in $ H $ are $ F $-preserving.
		\item 	Let $G$ be a subgroup of $\Lin(F)$. If there exists a lifting $\widetilde{G}$ of $G$ such that $ \widetilde{G} $ is $ F $-preserving, we say that $G$ is $F$-liftable and $\widetilde{G}$ is an $F$-lifting of $G$. In particular, for any $g \in \Lin(F)$, we say that $g$ is $F$-liftable if $g$ has a lifting $\widetilde{g}$ such that $F \circ \widetilde{g} = F$. In this case, we say $\widetilde{g}$ is an $F$-lifting of $g$.
	\end{enumerate}
\end{defn}

Let $X \subset \PP(V)$ be a smooth hypersurface of degree $d \geq 1$. Then $X$ is the zero locus of some homogeneous form $F \in S^{d}(V^{*})$.

\begin{defn}
	\label{definition: nonsingular}
	A nonzero homogeneous form $F \in S^{d}(V^{*})$ is called \textbf{nonsingular} if for some (hence every) basis $\{x_{1},x_{2},\cdots,x_{N}\}$ of $V^{*}$, the partial derivatives $\frac{\partial F}{\partial x_{1}},\frac{\partial F}{\partial x_{2}},\cdots,\frac{\partial F}{\partial x_{N}}$ and $ F $ have no common zero in $ \PP^{N-1}_{\overline{\KK}} $, where $ \overline{\KK} $ is the algebraic closure of $ \KK $.
\end{defn}

When $\KK$ is algebraically closed, the hypersurface $X$ is smooth if
and only if its defining form $F$ is nonsingular. Let $\Aut(X)$ denote
the group of regular automorphisms of $X$. If $N\geq3$ and $d\geq2$,
restriction to $X$ induces an injective homomorphism $\Lin(F)\longrightarrow\Aut(X)$. The following classical results of Matsumura and Monsky for
$N\geq4$, and of Chang for $N=3$, identify this map as an isomorphism,
apart from two exceptional pairs, and also give the finiteness of the
automorphism group.

\begin{thm}[{\cite[Theorems 1 and 2]{matsumura1963automorphisms};
	\cite{chang1978plane}}]
	\label{theorem: Aut=Lin}
	Let $\KK$ be an algebraically closed field of characteristic zero. Let $X$ be a smooth hypersurface defined by a homogeneous form $F$ of degree $d \geq 3$ in the projective space $\PP(V)$ over $\KK$ of dimension $N-1$. If $(N,d) \neq (3,3),(4,4)$, $N \geq 3$, $d \geq 3$, then $\Aut(X)=\Lin(F)$ is a finite subgroup of $\PGL(V)$.
\end{thm}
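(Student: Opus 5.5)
The plan has two parts. First, show that every automorphism of $X$ is induced by an element of $\PGL(V)$, so that the injective restriction map $\Lin(F)\to\Aut(X)$ is surjective. Second, show that $\Lin(F)$, a Zariski-closed subgroup of $\PGL(V)$, has dimension zero and is therefore finite. For surjectivity, the key observation is that $X$ is projectively normal: restriction $H^0(\PP(V),\mathcal O(1))\to H^0(X,\mathcal O_X(1))$ is an isomorphism because $d\ge 2$. Hence any $f\in\Aut(X)$ with $f^*\mathcal O_X(1)\cong\mathcal O_X(1)$ acts linearly on $H^0(X,\mathcal O_X(1))=V^*$. Since $X$ is embedded by this complete linear system, $f$ is the restriction of an element of $\PGL(V)$ preserving $X$, hence preserving $F$ up to a scalar. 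So it suffices to show that every automorphism preserves the polarization $H=\mathcal O_X(1)$.

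I would split this step by $N$. For $N\ge5$ ($\dim X\ge3$), the Grothendieck--Lefschetz theorem gives $\mathrm{Pic}(X)=\ZZ\cdot H$, and $f^*$ must send the ample generator to itself. For $N=4$, $X$ is a surface. The Lefschetz hyperplane theorem gives $H^1(X,\mathcal O_X)=0$ and shows $H^2(X,\ZZ)$ is torsion-free, so $\mathrm{Pic}(X)\subseteq H^2(X,\ZZ)$ is torsion-free. Adjunction gives $K_X=\mathcal O_X(d-4)$. Since $f^*K_X\cong K_X$ and $d\neq4$, we get $(d-4)(f^*H-H)=0$, hence $f^*H=H$ by torsion-freeness; this is where the case $(4,4)$ must be excluded. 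For $N=3$, $X$ is a smooth plane curve of degree $d\ge4$ (the case $d=3$ is excluded). Here I would invoke the classical uniqueness of the $g^2_d$ on a smooth plane curve (Chang, Namba): $\mathcal O_X(1)$ is the only base-point-free $g^2_d$. Since an automorphism permutes linear series of a given degree and dimension, it preserves $\mathcal O_X(1)$. For a reduction to an algebraically closed field of characteristic zero, all of this is standard.

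For finiteness, I would show that the Lie algebra of the stabilizer of $[F]$ in $\PGL(V)$ is zero. A tangent vector is a matrix $A=(a_{ij})$ with $\sum_{i,j}a_{ij}x_j\,\partial_iF=cF$, taken modulo scalar matrices. Using Euler's identity $dF=\sum x_i\partial_iF$, we can subtract $\frac{c}{d}I$ and reduce to a relation $\sum_i\ell_i\,\partial_iF=0$ with linear forms $\ell_i$. Nonsingularity means $\partial_1F,\dots,\partial_NF$ form a regular sequence of forms of degree $d-1$. Every syzygy among them is therefore Koszul, with coefficients of degree at least $d-1\ge2$, so there is no nonzero linear syzygy when $d\ge3$. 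Thus $A$ is scalar, $\Lin(F)$ is zero-dimensional, and being an algebraic group it is finite.

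The main obstacle I expect is the polarization step in low dimension. It needs the torsion-freeness of $\mathrm{Pic}$ for surfaces and, above all, the uniqueness of the $g^2_d$ for plane curves. The latter requires a genuine Brill--Noether-type argument: any other $g^2_d$ would give a second plane model, which one would rule out using Clifford-type estimates or Castelnuovo's bound. I would cite Chang for that case rather than reprove it.
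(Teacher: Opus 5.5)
Your argument is correct. It differs from the paper only in that the paper gives no proof of this statement at all: it quotes Matsumura--Monsky for $N\geq4$ and Chang for $N=3$.

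Your sketch reconstructs the classical structure behind those citations in three steps.
\begin{enumerate}
\item Linear normality, $H^0(\PP(V),\mathcal{O}(1))\cong H^0(X,\mathcal{O}_X(1))$, reduces $\Aut(X)=\Lin(F)$ to showing $f^*\mathcal{O}_X(1)\cong\mathcal{O}_X(1)$. This step uses $H^1(\PP^{N-1},\mathcal{O}(1-d))=0$, so it needs $N\geq3$ as well as $d\geq2$.
\item The polarization is preserved by Grothendieck--Lefschetz for $N\geq5$, by torsion-freeness of $\mathrm{Pic}(X)$ together with $K_X=\mathcal{O}_X(d-4)$ for $N=4$, and by uniqueness of the $g^2_d$ for $N=3$.
\item Finiteness follows from the vanishing of the Lie algebra of the stabilizer of $[F]$, using Euler's identity and exactness of the Koszul complex on the regular sequence $\partial_1F,\dots,\partial_NF$.
\end{enumerate}

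What your version buys over a bare citation is transparency. It shows that finiteness of $\Lin(F)$ holds for every $N$ and every $d\geq3$ in characteristic zero. So the excluded pairs $(3,3)$ and $(4,4)$ matter only in the polarization step. That is exactly where plane cubics (via translations) and quartic K3 surfaces can have automorphisms that move $\mathcal{O}_X(1)$.

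What it does not buy is independence from the literature. For plane curves with $d\geq5$ you still rest on Chang's uniqueness theorem, the same reference the paper cites; for $d=4$ you could instead use $K_X\cong\mathcal{O}_X(1)$ directly. The $N=4$ step uses topological Lefschetz inputs (simple connectivity, $\mathrm{Pic}(X)\hookrightarrow H^2(X,\ZZ)$). Over an arbitrary algebraically closed field of characteristic zero these require the Lefschetz principle, or the \'etale simple connectivity from SGA~2. Your phrase ``reduction to an algebraically closed field of characteristic zero'' should presumably read ``reduction to $\CC$''.
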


The notion of $F$-liftability applies to subgroups of $\Lin(F)$ for any nonzero homogeneous form $F$. In the setting of Theorem \ref{theorem: Aut=Lin}, $\Aut(X)$ and $\Lin(F)$ are identified, hence $F$-liftability is defined for subgroups of $\Aut(X)$ as well.  From now on, we will concentrate on lifting questions for subgroups of $\Lin(F)$, which are more general.

\subsection{Group Cohomology}
We use two standard functorial operations in group cohomology:
restriction to a subgroup and corestriction back to the ambient group.
The latter transfers a cohomology class from a finite-index subgroup by
summing its translates over the cosets. We recall a precise construction
and the relation between these two operations.

Let $G$ be a group and let $M$ be a left $G$-module, written
additively. For $n\geq0$, define
\[
C^n(G,M)=\Map(G^n,M)
\]
and the differential $\mathrm d^n\colon C^n(G,M)\to C^{n+1}(G,M)$ by
\begin{align*}
	(\mathrm d^n f)(g_1,\ldots,g_{n+1})
	={}&g_1\cdot f(g_2,\ldots,g_{n+1}) \\
	&+\sum_{i=1}^n(-1)^i
	f(g_1,\ldots,g_i g_{i+1},\ldots,g_{n+1}) +(-1)^{n+1}f(g_1,\ldots,g_n).
\end{align*}
Then we obtain a cochain complex $C^\bullet(G,M)$ given by
\[
0\longrightarrow C^0(G,M)\xrightarrow{\mathrm d^0}C^1(G,M)
\xrightarrow{\mathrm d^1}C^2(G,M)\longrightarrow\cdots.
\]
The $ n $-th cohomology $ H^{n}(G,M) $ of $ G $ with coefficients in $ M $ is then defined to be the $ n $-th cohomology group of $C^\bullet(G,M)$. More explicitly, $H^0(G,M)=M^G$, while for $n\geq1$,
\[
H^n(G,M)=\ker(\mathrm d^n)/\operatorname{im}(\mathrm d^{n-1}).
\]

Let $K\subseteq G$ be a subgroup, and regard $M$ as a $K$-module by
restriction. Restricting cochains from $G^n$ to $K^n$ induces
\[
\res_K^G\colon H^n(G,M)\longrightarrow H^n(K,M).
\]
When $K$ has finite index, the \emph{corestriction} is a homomorphism in
the opposite direction. To define it, consider the coinduced
$G$-module
\[
\operatorname{Coind}_K^G M
=\operatorname{Hom}_{\ZZ K}(\ZZ G,M),
\]
where $\ZZ G$ is regarded as a left $\ZZ K$-module. This module may be
viewed as the set of maps $\varphi\colon G\to M$ satisfying
\[
\varphi(kx)=k\cdot\varphi(x)\qquad(k\in K,\ x\in G).
\]
Its $G$-action is given by
$(g\cdot\varphi)(x)=\varphi(xg)$. Choose a set $T$ of representatives
for the right cosets $K\backslash G$. The coset trace
\[
\operatorname{Tr}(\varphi)
=\sum_{t\in T}t^{-1}\cdot\varphi(t)
\]
is independent of the representatives: replacing $t$ by $kt$ does not
change the summand because $\varphi(kt)=k\cdot\varphi(t)$. Moreover,
it defines a $G$-module homomorphism
\[
\operatorname{Tr}\colon\operatorname{Coind}_K^G M\longrightarrow M.
\]
Shapiro's lemma gives an isomorphism
\[
\operatorname{Sh}\colon
H^n(G,\operatorname{Coind}_K^G M)\xrightarrow{\sim}H^n(K,M),
\]
and the corestriction is defined by
\[
\corr_K^G
:=\operatorname{Tr}_*\circ\operatorname{Sh}^{-1}
\colon H^n(K,M)\longrightarrow H^n(G,M).
\]
In degree zero, this is the usual
norm map $M^K\to M^G$, given by
$m\mapsto\sum_{t\in T}t^{-1}\cdot m$; indeed, $T^{-1}$ is a set of
representatives for the left cosets $G/K$.

The corestriction is also called the cohomological transfer. The first
published treatment of the cohomological transfer is due to Eckmann
\cite[Theorem~7]{Eckmann1953Transfer}; see also Brown for the historical
background and a modern account
\cite[Chapter~III, \S9]{Brown1982Cohomology}. In particular,
\cite[Chapter~III, Proposition~9.5(ii)]{Brown1982Cohomology} gives the
following fundamental relation between restriction and corestriction.
\begin{lem}
	\label{lemma: res and cor}
	Let $G$ be a finite group and $K\subseteq G$ a subgroup. For every
	$z\in H^n(G,M)$,
	\[
	\corr_K^G\res_K^G z=[G:K]\cdot z.
	\]
	In particular, taking $K=\{1\}$ shows that $|G|$ annihilates
	$H^n(G,M)$ for every $n\geq1$.
\end{lem}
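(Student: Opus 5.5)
The plan is to follow the standard proof, as in Brown, working with cochain-level formulas and Shapiro's lemma as set up above. The key observation is that the composite $\corr_K^G\circ\res_K^G$ comes from a module map. On cohomology, $\res_K^G$ factors as
\[
H^n(G,M)\xrightarrow{\iota_*}H^n(G,\operatorname{Coind}_K^G M)\xrightarrow{\operatorname{Sh}}H^n(K,M),
\]
where $\iota\colon M\to\operatorname{Coind}_K^G M$ sends $m$ to the map $\varphi_m(x)=x\cdot m$. One checks directly that $\varphi_m(kx)=k\cdot\varphi_m(x)$ and that $\iota$ is $G$-equivariant, since $(g\cdot\varphi_m)(x)=xg\cdot m=\varphi_{gm}(x)$. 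Granting this factorization, we get
\[
\corr_K^G\res_K^G=\operatorname{Tr}_*\circ\operatorname{Sh}^{-1}\circ\operatorname{Sh}\circ\iota_*=(\operatorname{Tr}\circ\iota)_*.
\]
Moreover $\operatorname{Tr}(\iota(m))=\sum_{t\in T}t^{-1}\cdot t\cdot m=[G:K]\,m$. So $\operatorname{Tr}\circ\iota$ is multiplication by $[G:K]$ on $M$, which induces multiplication by $[G:K]$ on $H^n(G,M)$.

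The main obstacle is justifying the factorization $\res=\operatorname{Sh}\circ\iota_*$, because it depends on how $\operatorname{Sh}$ is normalized. I would fix $\operatorname{Sh}$ as the map induced on cochains by
\[
f\mapsto\bigl((k_1,\dots,k_n)\mapsto f(k_1,\dots,k_n)(1)\bigr),
\]
that is, restrict the cochain to $K$ and then evaluate at $1\in G$ (the "evaluate at $1$" map $\operatorname{Coind}_K^GM\to M$ is $K$-equivariant). This is the standard description of Shapiro's isomorphism as restriction followed by the counit. With this normalization, composing with $\iota$ sends a cocycle $f$ to $(k_1,\dots,k_n)\mapsto\varphi_{f(k_1,\dots,k_n)}(1)=f(k_1,\dots,k_n)$, which is exactly the restricted cocycle. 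So the factorization holds at the cochain level, and the whole computation is naturality of the induced maps. If one prefers to avoid checking that this explicit map is Shapiro's isomorphism, an alternative is to cite \cite[Chapter~III, Proposition~9.5(ii)]{Brown1982Cohomology} directly, as the paper does. The degree-zero case can also be checked by hand: for $m\in M^G$, the norm of $m$ is $\sum_{t}t^{-1}m=[G:K]m$.

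For the final assertion, take $K=\{1\}$. Then $H^n(\{1\},M)=0$ for $n\geq1$, because the cochain complex of the trivial group is $M\xrightarrow{0}M\xrightarrow{\mathrm{id}}M\xrightarrow{0}\cdots$, which is exact in positive degrees. Hence $|G|\cdot z=\corr\res z$ factors through the zero group, so $|G|\cdot z=0$.
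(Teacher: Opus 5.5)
Your proof is correct. Normalizing $\operatorname{Sh}$ as restriction followed by evaluation at $1$ gives $\res_K^G=\operatorname{Sh}\circ\iota_*$ on cochains, so $\corr_K^G\res_K^G=(\operatorname{Tr}\circ\iota)_*$ is multiplication by $[G:K]$, and your explicit computation of the trivial-group complex settles the case $K=\{1\}$. The paper gives no proof of its own; it cites Brown's Proposition III.9.5(ii), and your argument is essentially the standard proof given there.
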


We now specialize to the trivial $G$-module $\KK^\times$, which is the
coefficient group relevant to projective representations. In this case,
we write the cohomology groups multiplicatively. The low-degree cohomology
groups then have the following explicit descriptions:

$H^{1}(G,\KK^\times) = \Hom(G,\KK^\times)$, and $H^{2}(G,\KK^\times)$ is the quotient $Z^{2}(G,\KK^\times) / B^{2}(G,\KK^\times)$, where
\begin{align*}
	Z^{2}(G,\KK^\times)
	={}&\bigl\{c\colon G^2\to\KK^\times\mid
	c(h,k)c(g,hk)=c(g,h)c(gh,k)\bigr\},\\
	B^{2}(G,\KK^\times)
	={}&\bigl\{c\colon G^2\to\KK^\times\mid
	c(g,h)=f(gh)^{-1}f(g)f(h)\text{ for some }f\colon G\to\KK^\times\bigr\}.
\end{align*}
Under these explicit descriptions, the map $\res_{K}^{G}$ is simply the restriction of functions.

\section{Liftability and $ p $-Subgroups}
\label{section: sylow subgroup}

In this section, we use
group cohomology to reduce the lifting problems to Sylow $p$-subgroups. We first recall the relation between group cohomology and liftability. Fix for each $g \in G$ an element $L(g) \in \pi^{-1}(g)$. Then there exists a unique scalar $c(g,h) \in \KK^\times$ such that $L(gh)=c(g,h)L(g)L(h)$. Then $c$ satisfies:
\begin{equation*}
	\forall g,h,k \in G,\  c(h,k)c(g,hk)=c(g,h)c(gh,k).
\end{equation*}

So $c \in Z^{2}(G,\KK^\times)$. This $c$ depends on the choice of $L$. For another choice of preimage $L'(g) = f(g)L(g)$, where $f(g) \in \KK^\times$, suppose it defines $c' \in Z^{2}(G,\KK^\times)$. Notice that $f \in \Map(G,\KK^\times)$ satisfies: 
\begin{equation*}
	\forall g,h \in G,\ c'(g,h)=f(gh)f(g)^{-1}f(h)^{-1}c(g,h)
\end{equation*}

Thus $L$ defines a unique class $o_{G} = [c] $ in $H^{2}(G,\KK^\times)$, which is independent of the choice of $L$. $G$ is liftable if and only if $o_{G} = 1$. 

The following theorem reduces liftability of an arbitrary finite subgroup to that of its Sylow subgroups.

\begin{thm}
	\label{theorem: liftability and Sylow subgroups}
	Let $\KK$ be an arbitrary field, let $V$ be a finite-dimensional $\KK$-vector space, and let $G$ be a finite subgroup of $\PGL(V)$. Then $G$ is liftable if and only if for any prime $p$ dividing $|G|$, there exists a Sylow $p$-subgroup $G_{p}$ of $G$ such that $G_{p}$ is liftable. 
\end{thm}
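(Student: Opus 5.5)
The plan is to work entirely with the obstruction class $o_G\in H^2(G,\KK^\times)$ introduced above. Recall that $G$ is liftable if and only if $o_G=1$.

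\emph{The easy direction.} If $G$ is liftable with lifting $\widetilde G$, then for any subgroup $H\subseteq G$ the subgroup $\pi|_{\widetilde G}^{-1}(H)\subseteq\widetilde G$ maps isomorphically onto $H$. So every subgroup of $G$, in particular every Sylow subgroup, is liftable.

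\emph{Compatibility with restriction.} For the converse, I first check that for any subgroup $K\subseteq G$ we have $\res_K^G o_G=o_K$. Choose preimages $L(g)\in\pi^{-1}(g)$ for all $g\in G$. The cocycle $c$ of $L$ restricted to $K\times K$ is exactly the cocycle attached to $L|_K$, so its class is $o_K$. Since $\res_K^G$ on explicit cochains is restriction of functions, this gives the claim. Consequently, if $G_p$ is a liftable Sylow $p$-subgroup, then $\res_{G_p}^G o_G=1$.

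\emph{Main step: combining the primes.} Write $|G|=\prod_p p^{a_p}$. Fix a prime $p$ dividing $|G|$ and a liftable Sylow subgroup $G_p$. By Lemma~\ref{lemma: res and cor},
\[
o_G^{[G:G_p]}=\corr_{G_p}^G\res_{G_p}^G o_G=1,
\]
so the order of $o_G$ divides $[G:G_p]$, which is prime to $p$. Lemma~\ref{lemma: res and cor} with $K=\{1\}$ also shows that the order of $o_G$ divides $|G|$. Hence for each prime $p$ the $p$-part of the order of $o_G$ is trivial, and $o_G=1$. Equivalently, the integers $[G:G_p]$ over all $p$ dividing $|G|$ have greatest common divisor $1$, so some integer combination of them equals $1$. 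If $|G|=1$ there is nothing to prove.

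The only point needing care is the compatibility $\res o_G=o_{G_p}$ together with the independence of $o$ from the choice of $L$. Both follow directly from the explicit descriptions given earlier, so I expect no real obstacle. The corestriction enters only through Lemma~\ref{lemma: res and cor}, used purely formally, so I never need to compute it on cocycles.
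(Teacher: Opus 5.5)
Your proposal is correct and follows essentially the same route as the paper: you use $\res_{G_p}^G o_G=o_{G_p}=1$, then Lemma~\ref{lemma: res and cor} to get $o_G^{[G:G_p]}=1$ for each prime $p\mid |G|$, and finally the fact that the indices $[G:G_p]$ have greatest common divisor $1$ to conclude $o_G=1$. Your additional remark that the order of $o_G$ divides $|G|$ is valid but not needed, since the gcd argument alone already forces $o_G=1$.
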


\begin{proof}
		We only need to prove the sufficiency. 
		If $G=\{1\}$, the conclusion is immediate, so we may assume
		$|G|>1$.

	By definition of $o_{G}$ and $\res_{G_{p}}^{G}\colon H^{2}(G,\KK^\times) \rightarrow H^{2}(G_{p},\KK^\times)$, we have $o_{G_{p}} = \res_{G_{p}}^{G}o_{G}$. Since $G_{p}$ is liftable, we have $ \res_{G_{p}}^{G}o_{G} = o_{G_{p}}=1$. So by Lemma~\ref{lemma: res and cor}, for any prime $p$ dividing $|G|$, we have
	\begin{equation*}
		(o_{G})^{\frac{|G|}{|G_{p}|}} = \corr_{G_{p}}^{G}\res_{G_{p}}^{G}o_{G} = \corr_{G_{p}}^{G}(1) = 1,
	\end{equation*}
		For $k_p=|G|/|G_p|$, we have
		$\gcd_{p\mid |G|}k_p=1$. Hence there are integers $l_p$ such that
		$\sum_{p\mid |G|}k_pl_p=1$, and therefore
		\[
		o_G=\prod_{p\mid |G|}\bigl(o_G^{k_p}\bigr)^{l_p}=1.
		\]
		Thus $G$ is liftable.
\end{proof}

\begin{lem}
	\label{lemma: liftability when gcd order N=1}
	Let $\KK$ be an arbitrary field, let $V$ be an $N$-dimensional
	$\KK$-vector space, and let $G$ be a finite subgroup of $\PGL(V)$.
	If $\gcd(|G|,N)=1$, then $G$ is liftable.
\end{lem}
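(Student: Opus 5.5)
The plan is to show directly that the obstruction class $o_G\in H^2(G,\KK^\times)$ is annihilated both by $|G|$ and by $N$. Since $\gcd(|G|,N)=1$, this forces $o_G=1$, and hence $G$ is liftable.

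The first half is already available: by Lemma~\ref{lemma: res and cor}, with $K=\{1\}$, we have $o_G^{|G|}=1$.

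For the second half I will use determinants. Choose preimages $L(g)\in\pi^{-1}(g)$ with cocycle $c$, so that $L(gh)=c(g,h)L(g)L(h)$. Taking determinants gives
\[
\det L(gh)=c(g,h)^N\det L(g)\det L(h).
\]
Setting $f(g)=\det L(g)\in\KK^\times$, this reads $c(g,h)^N=f(gh)f(g)^{-1}f(h)^{-1}$. So $c^N$ is a coboundary, up to inversion of $f$: if we put $f'=f^{-1}$, then $c^N(g,h)=f'(gh)^{-1}f'(g)f'(h)$, which lies in $B^2(G,\KK^\times)$. Thus $o_G^N=1$.

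Finally, pick integers $a,b$ with $a|G|+bN=1$. Then
\[
o_G=(o_G^{|G|})^a(o_G^N)^b=1.
\]
Therefore $G$ is liftable, by the characterization of liftability through $o_G$ recalled before Theorem~\ref{theorem: liftability and Sylow subgroups}.

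I expect no real obstacle here. The only point to check carefully is the sign convention that makes $c^N$ match the coboundary form defining $B^2$, and this is handled by passing from $f$ to $f^{-1}$. Note that no roots of unity and no algebraic closure of $\KK$ are needed, which is why the argument works over an arbitrary field.
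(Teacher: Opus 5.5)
Your proposal is correct and follows essentially the same route as the paper: taking determinants shows $o_G^N=1$, Lemma~\ref{lemma: res and cor} with $K=\{1\}$ gives $o_G^{|G|}=1$, and $\gcd(|G|,N)=1$ then forces $o_G=1$. Your check that passing from $f$ to $f^{-1}$ puts $c^N$ in the paper's coboundary form is a detail the paper leaves implicit, and it is right.
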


\begin{proof}
	For each $g\in G$, choose $L(g)\in\pi^{-1}(g)$, and define
	$c(g,h)\in\KK^\times$ by
	\[
	L(gh)=c(g,h)L(g)L(h).
	\]
	Recall that $o_G=[c]\in H^2(G,\KK^\times)$.
	Since
	\begin{equation*}
		c(g,h)^{N} = \frac{\det(L(gh))}{\det(L(g))\det(L(h))},
	\end{equation*}
	we have $ o_{G}^{N} = 1 $. By Lemma \ref{lemma: res and cor} we have $ o_{G}^{|G|} = 1 $. Since $ \gcd(|G|,N) = 1 $, we have $ o_{G}=1 $. So $ G $ is liftable.
\end{proof}

\begin{lem}
	\label{lemma: liftability when gcd order N d=1}
	Let $\KK$ be an arbitrary field, let $V$ be an $N$-dimensional
	$\KK$-vector space, and let $F\in S^d(V^*)$ be a nonzero homogeneous
	form of degree $d\geq1$. If $G$ is a finite subgroup of $\Lin(F)$ and
	$\gcd(|G|,N,d)=1$, then $G$ is liftable.
\end{lem}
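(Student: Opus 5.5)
We show that $o_G^d=1$; combined with $o_G^N=1$ from Lemma~\ref{lemma: liftability when gcd order N=1} and $o_G^{|G|}=1$ from Lemma~\ref{lemma: res and cor}, this gives $o_G=1$, because $\gcd(|G|,N,d)=1$ means some integer combination of $|G|$, $N$, $d$ equals $1$.

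First choose the lifts. For each $g\in G\subseteq\Lin(F)$, pick $L(g)\in\pi^{-1}(g)$. Since $g\in\Lin(F)$, there is a scalar $\lambda(g)\in\KK^\times$ with
\[
F\circ L(g)=\lambda(g)F .
\]
The cocycle is defined as before by $L(gh)=c(g,h)L(g)L(h)$.

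Next compare $F\circ L(gh)$ in two ways. On one hand,
\[
F\circ L(gh)=\lambda(gh)F .
\]
On the other hand, $F$ is homogeneous of degree $d$, so $F\circ(cA)=c^dF\circ A$. Hence
\[
F\circ L(gh)=c(g,h)^d\,F\circ\bigl(L(g)L(h)\bigr).
\]
Because $L(g)L(h)$ acts by composition, $F\circ(L(g)L(h))=(F\circ L(g))\circ L(h)$. Substituting, this equals $\lambda(g)\,F\circ L(h)=\lambda(g)\lambda(h)F$. Since $F\neq 0$, we can compare the scalars:
\[
c(g,h)^d=\frac{\lambda(gh)}{\lambda(g)\lambda(h)} .
\]
The right-hand side is a coboundary, with $f=\lambda^{-1}$ in the convention $c(g,h)=f(gh)^{-1}f(g)f(h)$. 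Therefore $o_G^d=[c^d]=1$.

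Finally, write $1=a|G|+bN+ed$ with $a,b,e\in\ZZ$. Then
\[
o_G=(o_G^{|G|})^a(o_G^N)^b(o_G^d)^e=1,
\]
so $G$ is liftable. No step is a real obstacle. The only point needing care is the composition order in $F\circ(L(g)L(h))$, and it turns out to be harmless because the scalars commute.
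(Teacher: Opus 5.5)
Your proof is correct and matches the paper's argument: you derive $c(g,h)^d=\lambda(gh)/(\lambda(g)\lambda(h))$ to get $o_G^d=1$, then combine it with $o_G^N=1$ and $o_G^{|G|}=1$ via a B\'ezout relation. The only nuance is that $o_G^N=1$ comes from the determinant computation in the proof of Lemma~\ref{lemma: liftability when gcd order N=1}, not from its statement, and the paper uses it in exactly this way.
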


\begin{proof}
	As in the proof of Lemma~\ref{lemma: liftability when gcd order N=1},
	choose $L(g)\in\pi^{-1}(g)$ and let $c$ be the resulting cocycle, so
	that $o_G=[c]\in H^2(G,\KK^\times)$.
	Since $G\subseteq\Lin(F)$, for each $g\in G$ there exists
	$\gamma(g)\in\KK^\times$ such that $F\circ L(g)=\gamma(g)F$. Then
	\begin{equation*}
		c(g,h)^{d} = \frac{\gamma(gh)}{\gamma(g)\gamma(h)}.
	\end{equation*}
So $ o_{G}^{d} = 1 $. As in the proof of Lemma \ref{lemma: liftability when gcd order N=1}, we have $ o_{G}^{N} = o_{G}^{|G|} = 1 $. Since $ \gcd(|G|,N,d)=1 $, we have $ o_{G}=1 $. So $ G $ is liftable.
\end{proof}

We now combine Theorem~\ref{theorem: liftability and Sylow subgroups}
with Lemmas~\ref{lemma: liftability when gcd order N=1} and
\ref{lemma: liftability when gcd order N d=1} to prove
Theorem~\ref{main theorem: liftability and Sylow subgroups}.

\begin{proof}[Proof of Theorem~\ref{main theorem: liftability and Sylow subgroups}]
	The necessity in both statements is immediate. So we only need to prove the sufficiency.
	
	For (1), assume that
		$G$ has a liftable Sylow $p$-subgroup for every prime $p$ dividing
		$\gcd(|G|,N)$. If $p\mid |G|$ but $p\nmid N$, then a Sylow
	$p$-subgroup $G_p$ satisfies $\gcd(|G_p|,N)=1$ and is therefore
	liftable by Lemma~\ref{lemma: liftability when gcd order N=1}.
	Thus $G$ has a liftable Sylow $p$-subgroup for every prime $p\mid |G|$,
	and Theorem~\ref{theorem: liftability and Sylow subgroups} shows that
	$G$ is liftable.

	For (2), suppose that $G\subseteq\Lin(F)$ and that $G$ has a liftable
		Sylow $p$-subgroup for every prime $p$ dividing $\gcd(|G|,N,d)$. If
		$p\mid |G|$ but $p\nmid\gcd(N,d)$, then a Sylow $p$-subgroup
	$G_p$ satisfies $\gcd(|G_p|,N,d)=1$ and is therefore liftable by
	Lemma~\ref{lemma: liftability when gcd order N d=1}.
	Another application of
	Theorem~\ref{theorem: liftability and Sylow subgroups} completes the
	proof.
\end{proof}

We now turn to the $F$-liftability analogue of Theorem~\ref{theorem: liftability and Sylow subgroups}. The following Lemma reduces divisibility of an element in $ \Hom(G,\KK^\times)$ to that of the Sylow subgroups of $ G $.

\begin{lem}
	\label{lemma: d-divisibility and Sylow subgroup}
	Let $\KK$ be an arbitrary field, let $d \geq 1$ be an integer, and let $G$ be a finite group. For any prime $p$ dividing $|G|$, let $G_{p}$ be a Sylow $p$-subgroup of $G$. Assume $\gamma \in \Hom(G,\KK^\times)$ satisfies that for any $p$ dividing $|G|$, there exists $\delta_{p} \in \Hom(G_{p},\KK^\times)$ such that $ \gamma|_{G_{p}} = \delta_{p}^{d}$, then there exists $\delta \in \Hom(G,\KK^\times)$ such that $\gamma = \delta^{d}$.  
\end{lem}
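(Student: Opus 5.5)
The plan is to imitate the proof of Theorem~\ref{theorem: liftability and Sylow subgroups}, working in degree one instead of degree two. For the trivial $G$-module $\KK^\times$ we have $H^1(G,\KK^\times)=\Hom(G,\KK^\times)$ and $H^1(G_p,\KK^\times)=\Hom(G_p,\KK^\times)$. Under these identifications $\res_{G_p}^G$ is just restriction of homomorphisms, so $\res_{G_p}^G\gamma=\gamma|_{G_p}$.

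First, for each prime $p$ dividing $|G|$, I apply Lemma~\ref{lemma: res and cor} with $n=1$ and $K=G_p$. Writing $k_p=|G|/|G_p|=[G:G_p]$, this gives
\[
\gamma^{k_p}=\corr_{G_p}^G\res_{G_p}^G\gamma=\corr_{G_p}^G\bigl(\delta_p^{d}\bigr).
\]
Since $\corr_{G_p}^G$ is a group homomorphism $H^1(G_p,\KK^\times)\to H^1(G,\KK^\times)$, written multiplicatively, we get $\corr_{G_p}^G(\delta_p^d)=\bigl(\corr_{G_p}^G\delta_p\bigr)^d$. Setting $\varepsilon_p:=\corr_{G_p}^G\delta_p\in\Hom(G,\KK^\times)$, we therefore have $\gamma^{k_p}=\varepsilon_p^{d}$. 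Concretely, $\varepsilon_p$ is $\delta_p$ composed with the group-theoretic transfer $G\to G_p^{\mathrm{ab}}$, but this description is not needed.

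Second, the integers $k_p$, for $p$ ranging over the primes dividing $|G|$, have greatest common divisor $1$. Hence there are integers $l_p$ with $\sum_p k_pl_p=1$. I then set $\delta:=\prod_p\varepsilon_p^{l_p}\in\Hom(G,\KK^\times)$. Because $\Hom(G,\KK^\times)$ is abelian,
\[
\gamma=\prod_p\bigl(\gamma^{k_p}\bigr)^{l_p}=\prod_p\varepsilon_p^{dl_p}=\delta^{d}.
\]
The case $G=\{1\}$ is trivial, with $\delta=1$.

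There is no real obstacle. The only point to check is that Lemma~\ref{lemma: res and cor} is used in degree $n=1$ with the trivial module $\KK^\times$, and that corestriction is a homomorphism and therefore commutes with taking $d$-th powers. Both follow from the construction of $\corr$ as $\operatorname{Tr}_*\circ\operatorname{Sh}^{-1}$ recalled in \S\ref{section: pre}.
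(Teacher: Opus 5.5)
Your proof is correct and follows the paper's argument. You apply $\corr_{G_p}^G\res_{G_p}^G\gamma=\gamma^{k_p}$ in degree one and use that corestriction is a homomorphism to pull out the $d$-th power. You then combine via $\sum_p k_pl_p=1$ to take $\delta=\prod_p\bigl(\corr_{G_p}^G\delta_p\bigr)^{l_p}$, exactly as the paper does.
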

\begin{proof}
		If $G=\{1\}$, the conclusion is immediate. We may therefore assume
		that $|G|>1$. For each prime $p$ dividing $|G|$, denote by
		$k_{p}=|G|/|G_{p}|$. By Lemma \ref{lemma: res and cor}, there exists a
	group homomorphism
	\[
	\corr_{G_{p}}^{G}\colon
	\Hom(G_{p},\KK^\times)\longrightarrow\Hom(G,\KK^\times)
	\]
	such that
	\[
	\corr_{G_{p}}^{G}(\gamma|_{G_{p}})
	=\corr_{G_{p}}^{G}\res_{G_{p}}^{G}\gamma
	=\gamma^{k_{p}}.
	\]
	
	Since the greatest common divisor of $k_{p}$, for $p\mid |G|$, is
	$1$, there exist integers $l_{p}$, for $p\mid |G|$, such that
	\[
	\sum_{p\mid |G|}k_{p}l_{p}=1.
	\]
	Then taking
	\[
	\delta
	=\prod_{p\mid |G|}
	\bigl(\corr_{G_{p}}^{G}\delta_{p}\bigr)^{l_{p}}
	\in\Hom(G,\KK^\times),
	\]
	we have
	\begin{equation*}
		\delta^{d}
		=\prod_{p\mid |G|}
		\bigl(\corr_{G_{p}}^{G}\delta_{p}^{d}\bigr)^{l_{p}}
		=\prod_{p\mid |G|}
		\bigl(\corr_{G_{p}}^{G}(\gamma|_{G_{p}})\bigr)^{l_{p}}
		=\prod_{p\mid |G|}\gamma^{k_{p}l_{p}}
		=\gamma.
	\end{equation*}
\end{proof}

We now use the preceding lemma to prove the general $F$-liftability
criterion stated in the introduction.

\begin{proof}[Proof of Theorem~\ref{theorem: F-liftability and Sylow subgroups}]
	We only need to prove the sufficiency. 
	
	By Theorem \ref{theorem: liftability and Sylow subgroups}, there exists a lifting $\widetilde{G}$ of $G$. Denote by $\iota$ the inverse of $\pi|_{\widetilde{G}}\colon\widetilde{G} \rightarrow G$.  For each $g \in G$, there exists $\gamma(g) \in \KK^\times $ such that $ F \circ \iota(g) = \gamma(g) F $. Then we obtain a group homomorphism $\gamma \colon G \rightarrow \KK^\times$.
	
	For any prime $p$ dividing $|G|$, there exists an $F$-liftable Sylow $p$-subgroup $G_{p}$ of $G$. Choose an $F$-lifting $\widetilde{G_{p}}$ of $G_{p}$ and denote by $\iota_{p}$ the inverse of $\pi|_{\widetilde{G_{p}}}$. Note that both $\iota(G_{p}) $ and $ \widetilde{G_{p}} $ are liftings of $ G_{p} $. So there exists $\delta_{p} \in \Hom(G_{p},\KK^\times)$ such that $\iota(g) = \delta_{p}(g)\iota_{p}(g)$ for any $g \in G_{p}$. So $F \circ \iota(g) = \delta_{p}^{d}(g)(F \circ \iota_{p}(g)) =  \delta_{p}^{d}(g)F$ for any $g \in G_{p}$, which implies $\gamma|_{G_{p}} = \delta_{p}^{d} $. Then by Lemma \ref{lemma: d-divisibility and Sylow subgroup}, there exists $\delta \in \Hom(G,\KK^\times)$ such that $\gamma = \delta^{d}$. 
	
	We then define the following group homomorphism:
	\begin{equation*}
		\widetilde{\iota}\colon G \rightarrow \GL(V),\ \widetilde{\iota}(g)=\delta(g)^{-1}\iota(g).
	\end{equation*}
	
	Since $ (\pi \circ \widetilde{\iota})(g) = g $ and $F \circ \widetilde{\iota}(g) = \delta(g)^{-d}(F \circ \iota(g)) =  F $ for any $g \in G$, we deduce that $\widetilde{\iota}(G)$ is an $F$-lifting of $G$. So $G$ is $F$-liftable.
\end{proof}

\section{Liftability of a Single Element}
\label{section: single element}
In this section we consider the lifting question for a single element
of finite order. We begin with an elementary observation that will be
used repeatedly.

\begin{lem}
	\label{lemma: lifting finite-order projective element}
	Let $\KK$ be an algebraically closed field, let $V$ be a
	finite-dimensional $\KK$-vector space, and let $g\in\PGL(V)$ have
	finite order $q$. Then $g$ admits a lifting of order $q$.
\end{lem}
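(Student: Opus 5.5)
Choose any preimage $A\in\pi^{-1}(g)\subseteq\GL(V)$ and rescale it so that its order becomes exactly $q$. Since $g^q=1$ in $\PGL(V)$, we have $A^q=\lambda\cdot\mathrm{id}_V$ for some $\lambda\in\KK^\times$. Because $\KK$ is algebraically closed, there is $\mu\in\KK^\times$ with $\mu^q=\lambda$. Set $\widetilde g=\mu^{-1}A$. Then $\pi(\widetilde g)=g$ and $\widetilde g^{\,q}=\mu^{-q}A^q=\mathrm{id}_V$, so the order of $\widetilde g$ divides $q$.

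Next we check that the order is exactly $q$. If $\widetilde g^{\,m}=\mathrm{id}_V$ for some $1\le m$, then applying $\pi$ gives $g^m=1$, so $q\mid m$. Hence $\widetilde g$ has order exactly $q$. It follows that $\pi$ restricted to $\langle\widetilde g\rangle$ is a surjective homomorphism onto $\langle g\rangle$ between cyclic groups of the same order $q$, hence an isomorphism. So $\langle\widetilde g\rangle$ is a lifting of $\langle g\rangle$ in the sense of Definition~\ref{definition: liftability}, and $\widetilde g$ is a lifting of $g$ of order $q$.

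There is no real obstacle here. The only point needing care is the existence of a $q$-th root $\mu$ of $\lambda$, which is exactly where algebraic closedness is used. No separability or characteristic assumption is needed, since we only need a root and not a root with any particular order property: even in characteristic dividing $q$, the polynomial $x^q-\lambda$ still has a root in $\KK$.
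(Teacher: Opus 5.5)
Your proof is correct and is essentially the paper's argument: pick a preimage $A$, note that $A^q$ is a scalar, rescale $A$ by a $q$-th root of that scalar (this is where algebraic closedness is used), and conclude that the order is exactly $q$ because the projective image has order $q$. Your extra check that $\pi|_{\langle\widetilde g\rangle}$ is an isomorphism only makes explicit a step the paper leaves implicit in its definition of a lifting.
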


\begin{proof}
	Choose $A_0\in\pi^{-1}(g)$. Since $g^q=1$, we have
	$A_0^q=c\I_V$ for some $c\in\KK^\times$. Choose
	$\lambda\in\KK^\times$ such that $\lambda^q=c^{-1}$. Then
	$(\lambda A_0)^q=\I_V$. Its order is exactly $q$ because its
	projective image has order $q$.
\end{proof}

We first recall the terminology of \cite[\S2]{zheng2022abelian}.
\begin{defn}
	\label{definition: K-pure polynomial}
	For $k\geq1$ and $d\ge 1$, a polynomial of the form
	\[
	x_1^{d-1}x_2+x_2^{d-1}x_3+\cdots+x_k^{d-1}x_1
	\]
	is called a \emph{Klein polynomial} in $k$ variables; when $k=1$,
	it is the Fermat monomial $x_1^d$. A homogeneous polynomial is called
	\emph{K-pure} if it is a sum of Klein polynomials in mutually disjoint
	sets of variables, with every variable occurring in exactly one summand.
\end{defn}

The next lemma allows us to replace a nonsingular polynomial by a K-pure
polynomial while preserving an abelian diagonal action and its
semi-invariant character. Thus questions depending only on this action can
be reduced to the corresponding questions for Klein polynomials. We call
this procedure the \emph{reduction-to-Klein method}. The statement and
proof are adapted from \cite[Lemma~4.1 and Theorem~4.2]{zheng2022abelian};
although those results are stated over $\CC$, their arguments are
algebraic. We include the argument in the form needed here.

\begin{lem}
	\label{lemma: reduction to klein}
	Let $\KK$ be an algebraically closed field of characteristic zero, let $V$ be an $N$-dimensional $\KK$-vector space, and let $H\subseteq\GL(V)$ be a finite abelian group such that
	$\gcd(|H|,d-1)=1$. Suppose that $F\in S^d(V^*)$ is nonsingular and $H$-semi-invariant, namely,
	there is a character $\chi\colon H\to\KK^\times$ such that
	\[
	F\circ h=\chi(h)F\qquad(h\in H).
	\]
	Then one can choose coordinates $x_1,\ldots,x_N$ for $V$ in which $H$
	is diagonal, and there exists a K-pure polynomial $\widehat F$ of degree $ d $ such that
	\[
	\widehat F\circ h=\chi(h)\widehat F\qquad(\forall h\in H).
	\]
\end{lem}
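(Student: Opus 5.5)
The plan is to diagonalize $H$, show that the characters of the coordinates are permuted by an explicit map on the character group, and then read off a K-pure polynomial from a permutation of the variables compatible with that map. Nonsingularity of $F$ will only be used to compare the dimensions of eigenspaces.

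\emph{Step 1: diagonalize $H$ and set up the character map.} Since $\KK$ is algebraically closed of characteristic zero and $H$ is finite abelian, $H$ is diagonalizable, so we fix coordinates $x_1,\ldots,x_N$ with $h\cdot x_i=\chi_i(h)x_i$ for characters $\chi_i\in\widehat H=\Hom(H,\KK^\times)$. A monomial $x_{i_1}\cdots x_{i_d}$ is semi-invariant with character $\chi_{i_1}\cdots\chi_{i_d}$. Hence $F$ is a combination only of monomials whose character is $\chi$.

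Define $\varphi\colon\widehat H\to\widehat H$ by $\varphi(\alpha)=\chi\,\alpha^{1-d}$. The group $\widehat H$ has exponent dividing $|H|$, and $\gcd(|H|,d-1)=1$, so $\alpha\mapsto\alpha^{d-1}$ is a bijection of $\widehat H$. Therefore $\varphi$ is a permutation of the finite set $\widehat H$. For each $\alpha\in\widehat H$ let $E_\alpha$ be the span of the coordinate vectors $e_i$ with $\chi_i=\alpha$, and let $m_\alpha=\dim E_\alpha$.

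\emph{Step 2 (the main point): $m_{\varphi(\alpha)}\ge m_\alpha$ for every $\alpha$.} Restrict the partial derivative $\partial F/\partial x_j$ to $E_\alpha$. It is a form of degree $d-1$ in the $\alpha$-variables, all of whose monomials have character $\alpha^{d-1}$. On the other hand it is $H$-semi-invariant with character $\chi\chi_j^{-1}$. So it vanishes identically on $E_\alpha$ unless $\chi_j=\chi\alpha^{1-d}=\varphi(\alpha)$.

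By Euler's formula (characteristic zero), $F$ lies in the ideal generated by its partials, so $F$ need not be imposed separately. Thus the common zero locus in $\PP(E_\alpha)$ of $F$ and all its partials is cut out by the $m_{\varphi(\alpha)}$ forms $\partial F/\partial x_j|_{E_\alpha}$ with $\chi_j=\varphi(\alpha)$. If $m_{\varphi(\alpha)}<m_\alpha$, these forms would have a common zero in $\PP(E_\alpha)$, contradicting nonsingularity (Definition~\ref{definition: nonsingular}). Summing the inequalities $m_{\varphi(\alpha)}\ge m_\alpha$ around each (finite) $\varphi$-cycle forces equality: $m_{\varphi(\alpha)}=m_\alpha$ for all $\alpha$.

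I expect this step to be the main obstacle. A naive attempt using only the monomials $x_i^{d-1}x_j$ forced to appear in $F$ yields a map on variables that need not be a permutation, for example when there are "tree" variables feeding into a cycle. The dimension count on eigenspaces is what repairs this.

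\emph{Step 3: build $\widehat F$.} For each $\alpha$, choose a bijection from the $\alpha$-variables to the $\varphi(\alpha)$-variables; this is possible by Step 2. Together these bijections define a permutation $\sigma$ of $\{1,\ldots,N\}$ with $\chi_{\sigma(i)}=\varphi(\chi_i)$, that is, $\chi_i^{d-1}\chi_{\sigma(i)}=\chi$. Decompose $\sigma$ into disjoint cycles $(i_1\,i_2\,\cdots\,i_k)$ and set
\[
\widehat F=\sum_{\text{cycles}}\bigl(x_{i_1}^{d-1}x_{i_2}+\cdots+x_{i_k}^{d-1}x_{i_1}\bigr),
\]
where a $1$-cycle contributes the Fermat term $x_{i_1}^d$. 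This $\widehat F$ is K-pure of degree $d$ in the sense of Definition~\ref{definition: K-pure polynomial}, and every variable occurs in exactly one summand. Each monomial $x_i^{d-1}x_{\sigma(i)}$ has character $\chi$, so $\widehat F\circ h=\chi(h)\widehat F$ for all $h\in H$.
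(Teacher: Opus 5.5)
Your proposal is correct and follows essentially the same route as the paper. Both arguments diagonalize $H$ and show that the permutation $\alpha\mapsto\chi\alpha^{1-d}$ of $\widehat H$ cannot decrease eigenspace dimensions, which forces equality along its orbits. Both then read off $\widehat F=\sum_i x_i^{d-1}x_{\sigma(i)}$ from a compatible permutation $\sigma$.

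The only difference is presentational. You apply nonsingularity directly to the partials restricted to $E_\alpha$, using Euler's formula, whereas the paper first proves an auxiliary statement: for disjoint $I,J$ with $|I|>|J|$, $F$ has a monomial of $I$-degree at least $d-1$ and $J$-degree zero. The paper also handles $d=1$ separately. In your version that case is harmless, since then $H$ is trivial and your map $\varphi$ is the identity.
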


\begin{proof}
		When $d=1$, the condition $\gcd(|H|,d-1)=1$ forces $H=\{1\}$. 
		After choosing any coordinates $x_1,\ldots,x_N$, the polynomial
		$\widehat F=x_1+\cdots+x_N$ is K-pure and $H$-invariant, so the
		conclusion is immediate. We may therefore assume $d\ge 2$. Since $H$ is finite and abelian, its elements can be diagonalized
	simultaneously. Choose coordinates such that
	\[
	h=\diag\bigl(\lambda_1(h),\ldots,\lambda_N(h)\bigr)
	\qquad(h\in H),
	\]
	where $\lambda_1,\ldots,\lambda_N\in\Hom(H,\KK^\times)$. Every monomial
	of $F$ with nonzero coefficient then has $H$-character $\chi$.
	
	We first recall the consequence of nonsingularity used in \cite[Lemma 3.2]{oguiso2019quintic} and 
	\cite[Lemma~4.1]{zheng2022abelian}. If $I,J\subseteq\{1,\ldots,N\}$
	are disjoint and $|I|>|J|$, then $F$ contains a monomial whose total
	degree in the variables indexed by $I$ is at least $d-1$ and whose
	total degree in the variables indexed by $J$ is zero. Indeed, otherwise
	the sum of the terms of $F$ having $I$-degree at least $d-1$ would have
	the form
	\[
	\sum_{j\in J}Q_j(x_i\mid i\in I)x_j,
	\]
	where the $Q_j$ are homogeneous of degree $d-1$. Since $|J|<|I|$,
	the equations $Q_j=0$ have a common nonzero solution in the variables
	indexed by $I$. Setting all other variables equal to zero then gives a
	singular point of $F$, a contradiction.
	
	For a character $\alpha\in\widehat H=\Hom(H,\KK^\times)$, set
	\[
	I_\alpha=\{i\mid\lambda_i=\alpha\},
	\]
	and define
	\[
	\tau(\alpha)=\chi\alpha^{1-d}.
	\]
	Since $\gcd(|H|,d-1)=1$, the power map
	$\alpha\mapsto\alpha^{1-d}$ is an automorphism of $\widehat H$;
	hence $\tau$ is a permutation of $\widehat H$.
	
	We claim that
	\[
	|I_{\tau(\alpha)}|\ge |I_\alpha|
	\]
	for every $\alpha$. This is clear if $\tau(\alpha)=\alpha$. Otherwise,
	suppose that $|I_{\tau(\alpha)}|<|I_\alpha|$ and apply the preceding
	consequence of nonsingularity with
	$I=I_\alpha$ and $J=I_{\tau(\alpha)}$. We obtain a monomial of $F$
	with $I$-degree at least $d-1$ and $J$-degree zero. Its $I$-degree
	cannot be $d$, since
	\[
	\chi=\alpha^{d-1}\tau(\alpha)
	\]
	and $\tau(\alpha)\ne\alpha$. It therefore has $I$-degree $d-1$,
	and its remaining variable must have character $\tau(\alpha)$ because
	the monomial has character $\chi$. This contradicts its $J$-degree
	being zero, and proves the claim.
	
	Along each orbit of the permutation $\tau$, these inequalities force
	all the cardinalities $|I_\alpha|$ to be equal. We may therefore choose
	bijections
	\[
	I_\alpha\longrightarrow I_{\tau(\alpha)}
	\]
	for all $\alpha$. Together they define a permutation $\sigma$ of
	$\{1,\ldots,N\}$ satisfying
	\[
	\lambda_i^{d-1}\lambda_{\sigma(i)}=\chi
	\qquad(1\le i\le N).
	\]
	Consequently,
	\[
	\widehat F=\sum_{i=1}^N x_i^{d-1}x_{\sigma(i)}
	\]
	is $H$-semi-invariant with character $\chi$. Finally, decomposing
	$\sigma$ into disjoint cycles expresses $\widehat F$ as a sum of Klein
	polynomials in mutually disjoint sets of variables. Thus $\widehat F$
	is K-pure.
\end{proof}

The following description of diagonal automorphisms of a Klein polynomial is
given in \cite[Lemma 3.1]{zheng2022abelian}. We include the proof for the
convenience of the reader.

\begin{lem}
	\label{lemma: aut of klein}
	Let $\KK$ be an arbitrary field of characteristic zero. Let
	\[
	K=x_1^{d-1}x_2+x_2^{d-1}x_3+\cdots+x_n^{d-1}x_1\in\KK[x_1,\ldots,x_n]
	\]
	be a Klein polynomial. If $A=\diag(a_1,\ldots,a_n)$ satisfies
	$K\circ A=K$, then
	\[
	a_i=a_1^{(1-d)^{i-1}}\quad(1\le i\le n),
	\qquad a_1^{1-(1-d)^n}=1,
	\]
	and $\det(A)^d=1$.
\end{lem}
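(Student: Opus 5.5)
Since $K\circ A=K$ and $A$ is diagonal, I will compare coefficients monomial by monomial. The $n$ monomials $x_i^{d-1}x_{i+1}$ (indices mod $n$) are distinct whenever $n\ge2$ and $d\ge2$, and $K\circ A=\sum_i a_i^{d-1}a_{i+1}\,x_i^{d-1}x_{i+1}$. So the hypothesis is equivalent to the system
\[
a_i^{d-1}a_{i+1}=1\qquad(1\le i\le n,\ a_{n+1}:=a_1).
\]
The degenerate cases I handle separately. If $n=1$, then $K=x_1^d$, so the condition is $a_1^d=1$; this matches $a_1^{1-(1-d)}=a_1^d=1$, and $\det(A)^d=a_1^d=1$. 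If $d=1$, then $K=x_1+\cdots+x_n$, so all $a_i=1$ and everything is trivial. The case $n\ge2$, $d=1$ is also consistent with the formulas, since $(1-d)^{i-1}=0$ for $i\ge2$.

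For the main case, the relation $a_{i+1}=a_i^{1-d}$ gives, by induction on $i$, $a_i=a_1^{(1-d)^{i-1}}$ for $1\le i\le n$. Applying the last relation $a_n^{d-1}a_1=1$ gives $a_1=a_n^{1-d}=a_1^{(1-d)^n}$. Hence $a_1^{1-(1-d)^n}=1$.

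I must be careful here because exponents like $(1-d)^{i-1}$ can be negative. This is harmless since each $a_i\in\KK^\times$: the $a_i$ are invertible because $A$ is invertible.

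For the determinant, I would avoid the closed form and multiply the $n$ relations $a_i^{d-1}a_{i+1}=1$ together. Every index appears once with exponent $d-1$ and once with exponent $1$, so this yields $\prod_i a_i^{d}=1$, i.e. $\det(A)^d=1$. This route also covers $n=1$ directly.

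The only real obstacle is bookkeeping in the degenerate cases: when $n=2$, the monomials $x_1^{d-1}x_2$ and $x_2^{d-1}x_1$ coincide if $d=2$. In that case $K=2x_1x_2$, and the single condition $a_1a_2=1$ still agrees with the claimed formulas: $a_2=a_1^{-1}$, $a_1^{1-1}=1$ holds vacuously, and $\det(A)^2=1$. Characteristic zero ensures that this coefficient $2\ne0$. So I would check this small case by hand and note that otherwise the monomials are distinct.
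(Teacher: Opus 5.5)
Your proof is correct and is essentially the paper's argument: compare coefficients to get $a_i^{d-1}a_{i+1}=1$ (indices mod $n$), iterate to obtain $a_i=a_1^{(1-d)^{i-1}}$, and close the cycle at $i=n$. The only difference is in the determinant. You multiply the $n$ relations directly to get $\prod_i a_i^d=1$, whereas the paper uses the closed form together with the identity $d\sum_{i=0}^{n-1}(1-d)^i=1-(1-d)^n$; you also check the degenerate cases ($n=1$, $d=1$, and the coincidence $x_1x_2=x_2x_1$ when $n=d=2$) explicitly, which the paper leaves implicit.
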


\begin{proof}
	Comparing the coefficients of the monomials in $K\circ A=K$ gives
	$a_i^{d-1}a_{i+1}=1$, where the indices are read modulo $n$.
	Thus $a_{i+1}=a_i^{1-d}$, and iteration gives
	$a_i=a_1^{(1-d)^{i-1}}$. The last relation, with $i=n$, gives
	$a_1^{1-(1-d)^n}=1$. Finally,
	\[
	\det(A)^d
	=a_1^{d\sum_{i=0}^{n-1}(1-d)^i}
	=a_1^{1-(1-d)^n}=1.
	\]
\end{proof}

\begin{rmk}
	\label{remark: diagonal automorphisms of Klein polynomial}
	Assume that $\KK$ is an algebraically closed field of characteristic zero. For $d\ge 3$, the full group-theoretic description is given in
	\cite[Lemma 3.1]{zheng2022abelian}. If $M=|1-(1-d)^n|$, then the
	group of diagonal matrices in $\GL(n,\KK)$ preserving $K=x_1^{d-1}x_2+x_2^{d-1}x_3+\cdots+x_n^{d-1}x_1$ is cyclic
	of order $M$, generated by
	\[
	\diag\bigl(\zeta,\zeta^{1-d},\ldots,
	\zeta^{(1-d)^{n-1}}\bigr),
	\]
	where $\zeta$ is a primitive $M$-th root of unity. Its image in
	$\PGL(n,\KK)$ is the group of diagonal projective automorphisms of
	$V(K)$ and is cyclic of order $M/d$.
	
	When $d=2$ and $n$ is odd, the corresponding projective statement
	remains valid: every diagonal projective automorphism of
	$K=x_1x_2+\cdots+x_nx_1$ is trivial. In contrast, when $d=2$ and $n$
	is even, the group of diagonal projective automorphisms is infinite.
\end{rmk}

\begin{prop}
	\label{proposition: determinant equality}
	Let $\KK$ be an arbitrary field of characteristic zero, let $V$ be a finite-dimensional $\KK$-vector space, and let $F \in S^{d}(V^{*})$ be nonsingular with $d\ge 1$. Let $\varphi \in \GL(V)$ be an element of finite order such that $F \circ \varphi = F$
	and $\gcd(\ord(\varphi),d-1)=1$. Then $\det(\varphi)^d=1$.
\end{prop}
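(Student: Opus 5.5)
First pass to the algebraic closure. Replace $\KK$ by $\overline{\KK}$ and $V$ by $V\otimes_\KK\overline{\KK}$. Nonsingularity is already defined over $\overline{\KK}$, and $\det(\varphi)$ does not change under this extension, so we may assume $\KK$ is algebraically closed of characteristic zero.

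Now set $H=\langle\varphi\rangle\subseteq\GL(V)$.
\begin{itemize}
\item $H$ is finite and abelian.
\item $|H|=\ord(\varphi)$ is coprime to $d-1$.
\item $F$ is $H$-semi-invariant with the trivial character $\chi=1$.
\end{itemize}
So Lemma~\ref{lemma: reduction to klein} applies. It gives coordinates $x_1,\ldots,x_N$ in which $\varphi=\diag(a_1,\ldots,a_N)$, together with a K-pure polynomial $\widehat F$ of degree $d$ satisfying $\widehat F\circ\varphi=\widehat F$.

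Write
\[
\widehat F=K_1+\cdots+K_r,
\]
where each $K_j$ is a Klein polynomial in a block $B_j$ of variables, and the blocks partition $\{1,\ldots,N\}$. Because $\varphi$ is diagonal, $\widehat F\circ\varphi$ is again a sum over the same monomials. The monomials appearing in distinct $K_j$ are distinct, since they involve disjoint sets of variables. Comparing coefficients therefore gives
\[
K_j\circ\varphi_j=K_j\quad\text{for each } j,
\]
where $\varphi_j=\diag(a_i)_{i\in B_j}$.

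This is the one step that needs care. For a Klein polynomial in $n\ge 2$ variables with $d\ge 2$, the monomials $x_i^{d-1}x_{i+1}$ are pairwise distinct. The degenerate cases also work:
\begin{itemize}
\item If $n=1$, the polynomial is $x^d$, and the condition reads $a^d=1$.
\item If $d=1$, then $\gcd(\ord\varphi,0)=1$ forces $\varphi=1$, so the statement is trivial. Handle this case at the outset.
\item If $d=2$ and $n=2$, the Klein polynomial is $2x_1x_2$. Preservation still gives $a_1a_2=1$, so $\det(\varphi_j)^d=1$ holds in this case as well.
\end{itemize}

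Lemma~\ref{lemma: aut of klein} then gives $\det(\varphi_j)^d=1$ for each $j$. Multiplying over $j$ yields
\[
\det(\varphi)^d=\prod_j\det(\varphi_j)^d=1.
\]

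The expected main obstacle is exactly the coefficient-matching step that splits $\widehat F\circ\varphi=\widehat F$ into the separate conditions $K_j\circ\varphi_j=K_j$, together with checking the small cases $n\le 2$ of Lemma~\ref{lemma: aut of klein}, where monomials might coincide. The rest is a direct combination of the two preceding lemmas.
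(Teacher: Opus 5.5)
Your proposal is correct and is essentially the paper's proof. You pass to $\overline{\KK}$, apply Lemma~\ref{lemma: reduction to klein} to $H=\langle\varphi\rangle$ with trivial character, split $\widehat F\circ\varphi=\widehat F$ into the conditions on the individual Klein components, and apply Lemma~\ref{lemma: aut of klein} blockwise; your extra checks of the coefficient-matching step and of the degenerate cases ($d=1$, $n=1$, and $d=2,n=2$ where the component is $2x_1x_2$, harmless in characteristic zero) fill in details the paper leaves implicit.
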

\begin{proof}
	By replacing $ \KK $ with its algebraic closure, we may assume $ \KK $ is algebraically closed. Applying Lemma \ref{lemma: reduction to klein} to
	$H=\langle\varphi\rangle$, we can choose coordinates $x_1,\ldots,x_N$ for $V$ in which $ \varphi $	is diagonal and preserves a K-pure polynomial of degree $ d $
	\[
	\widehat F=K_1+\cdots+K_t,
	\]  
	where the $K_j$ involve
	mutually disjoint sets of variables. The diagonal matrix of $\varphi$ restricts to a diagonal
	automorphism $\varphi_j$ preserving each $K_j$. Then Lemma
	\ref{lemma: aut of klein} yields $\det(\varphi_j)^d=1$ for every
	$j$. Hence $\det(\varphi)^d=\prod_{j=1}^t\det(\varphi_j)^d=1.$
\end{proof}

\begin{prop}
	\label{proposition: F-liftability when order coprime to d}
	Let $\KK$ be an arbitrary field, let $V$ be a finite-dimensional
	$\KK$-vector space, and let $F\in S^d(V^*)$ be a nonzero homogeneous
	form of degree $d\geq1$. Let $g\in\Lin(F)$ have finite order $q$. If
	$\gcd(q,d)=1$, then $g$ admits a unique $F$-lifting in $\GL(V)$.
\end{prop}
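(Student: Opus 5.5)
The plan is to argue directly with scalars. No cohomology and no assumption on $\KK$ will be needed; the only input is the Bezout identity coming from $\gcd(q,d)=1$. Fix any $A\in\pi^{-1}(g)$. Since $g\in\Lin(F)$, there is $\gamma\in\KK^\times$ with $F\circ A=\gamma F$. Since $g^q=1$, there is $c\in\KK^\times$ with $A^q=c\,\I_V$. Comparing the two ways of computing $F\circ A^q$ gives
\[
\gamma^q F=F\circ A^q=F\circ(c\,\I_V)=c^d F .
\]
Because $F\neq 0$, this yields the compatibility relation $\gamma^q=c^d$.

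For existence, I look for a scalar $\lambda\in\KK^\times$ such that $\lambda A$ satisfies both $(\lambda A)^q=\I_V$ and $F\circ(\lambda A)=F$. These conditions amount to $\lambda^q=c^{-1}$ and $\lambda^d=\gamma^{-1}$. Choose integers $u,v$ with $ud+vq=1$, and set $\lambda=\gamma^{-u}c^{-v}$. Using $c^d=\gamma^q$, we get
\[
\lambda^d=\gamma^{-ud}c^{-vd}=\gamma^{-ud}\gamma^{-vq}=\gamma^{-1},
\qquad
\lambda^q=\gamma^{-uq}c^{-vq}=c^{-ud}c^{-vq}=c^{-1}.
\]
Thus $\lambda A$ satisfies $(\lambda A)^q=\I_V$, so its order divides $q$. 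Its projective image $g$ has order $q$, so the order of $\lambda A$ is exactly $q$. By Definition~\ref{definition: liftability}, $\lambda A$ is therefore a lifting of $g$, and it is $F$-preserving, so it is an $F$-lifting.

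For uniqueness, let $B,B'$ be two $F$-liftings of $g$. Both lie in $\pi^{-1}(g)$, so $B'=\mu B$ for some $\mu\in\KK^\times$. Since $\mu$ is central and both $B$ and $B'$ have order $q$, we get $\I_V=B'^q=\mu^q B^q=\mu^q\I_V$, hence $\mu^q=1$. Since $F\circ B'=\mu^d\,(F\circ B)=\mu^d F$ and $F\circ B'=F\neq 0$, we also get $\mu^d=1$. Then $\mu=\mu^{ud+vq}=1$, so $B'=B$.

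I do not expect a genuine obstacle here. The only point needing care is the relation $\gamma^q=c^d$, which is what makes the two scalar conditions on $\lambda$ simultaneously solvable without extracting roots in $\KK$; this is why the statement holds over an arbitrary field.
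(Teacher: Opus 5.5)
Your argument is correct, and it takes a genuinely different and more economical route than the paper's proof.

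The paper works in two stages.
\begin{enumerate}
\item It first assumes $\KK$ is algebraically closed. Lemma~\ref{lemma: lifting finite-order projective element}, which needs a $q$-th root of $c^{-1}$, gives an order-$q$ preimage $A$ with $F\circ A=\zeta F$ and $\zeta^q=1$. The paper then rescales $A$ by the root of unity $\zeta^{-a}$.
\item For general $\KK$, it needs some lifting $C\in\GL(V)$ defined over $\KK$. It gets this from the cohomological Lemma~\ref{lemma: liftability when gcd order N d=1}, using $\gcd(q,\dim V,d)=1$. It then descends: writing $C=\mu\widetilde g$, it shows $\mu\in\KK^\times$ from $\mu^q=1$ and $\mu^d\in\KK^\times$.
\end{enumerate}

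You instead start from an arbitrary preimage defined over $\KK$ and isolate the single relation $\gamma^q=c^d$. B\'ezout then solves $\lambda^q=c^{-1}$ and $\lambda^d=\gamma^{-1}$ simultaneously inside $\KK$. So you never pass to $\overline{\KK}$, never need Lemma~\ref{lemma: lifting finite-order projective element}, and never invoke cohomology. In effect, your identity $c=(\gamma^{u}c^{v})^{q}$ is an explicit cyclic-group version of the step $o_G^{d}=o_G^{q}=1\Rightarrow o_G=1$ in Lemma~\ref{lemma: liftability when gcd order N d=1}, merged with the normalization of $F$. The result is a self-contained and shorter proof.

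The paper's two-stage format has one advantage: it parallels the proof of Proposition~\ref{proposition: F-liftability when p mid d and p nmid N}. There the paper does pass to $\overline{\KK}$, because the reduction-to-Klein step, which uses simultaneous diagonalization, is carried out over an algebraically closed field.

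Your uniqueness argument is the same as the paper's.
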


\begin{proof}
	Suppose first that $\KK$ is algebraically closed. By Lemma
	\ref{lemma: lifting finite-order projective element}, choose a lifting
	$A$ of $g$ of order $q$, and write $F\circ A=\zeta F$. Then
	$\zeta^q=1$. Choose integers $a,b$ such that $ad+bq=1$, and set
	$\xi=\zeta^{-a}$. We have $\xi^q=1$ and $\xi^d=\zeta^{-1}$.
	Consequently, $\widetilde g=\xi A$ satisfies
	\[
	\pi(\widetilde g)=g,\qquad
	\widetilde g^q=\I_V,\qquad
	F\circ\widetilde g=F.
	\]
	Its projective image has order $q$, so $\widetilde g$ also has order
	$q$ and is therefore an $F$-lifting of $g$.
	
	Now let $\KK$ be arbitrary and let $\overline{\KK}$ be its algebraic
	closure. The preceding argument gives an $F$-lifting $\widetilde g$
	over $\overline{\KK}$. Since
	$\gcd(q,\dim(V),d)=1$, Lemma
	\ref{lemma: liftability when gcd order N d=1} gives a lifting
	$C\in\GL(V)$ of $g$. Write $C=\mu\widetilde g$ with
	$\mu\in\overline{\KK}^\times$. Both matrices have order $q$, so
	$\mu^q=1$. Moreover,
	\[
	F\circ C=\mu^dF.
	\]
	Since $C$ and $F$ are defined over $\KK$ and $F\ne0$, we have
	$\mu^d\in\KK^\times$. As $\gcd(q,d)=1$, it follows that
	$\mu\in\KK^\times$, and hence $\widetilde g=\mu^{-1}C$ belongs to
	$\GL(V)$.
	
	Finally, if $\overline g$ is another $F$-lifting, write
	$\widetilde g=\lambda\overline g$. Then
	$\lambda^q=\lambda^d=1$, so $\lambda=1$.
\end{proof}

We next consider elements of prime-power order when the prime divides
the degree. Over $\CC$, the following result appears as
\cite[Lemma 3.1]{gonzalez2020lift}.\footnote{In the second sentence of the proof of \cite[Lemma 3.1]{gonzalez2020lift}, the authors invoke Proposition 1.15 to obtain an $F$-lifting. As stated, Proposition 1.15 does not seem to apply directly in the required generality, so an additional argument appears to be needed at this point. The proof below supplies such an argument.} 
\begin{prop}
	\label{proposition: F-liftability when p mid d and p nmid N}
	Let $\KK$ be an arbitrary field of characteristic zero, let $V$ be an $N$-dimensional $\KK$-vector space, and let $F \in S^{d}(V^{*})$ be a nonsingular homogeneous form of degree $d\ge 1$. Let $g \in \Lin(F)$ be an element of order $p^{r}$ with prime $p$ and $r \in \ZZ^+$. If $p \mid d $ but $ p \nmid N $, then there exists a unique $F$-lifting of $g$ in $\SL(V)$.
\end{prop}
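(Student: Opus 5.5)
We work over $\overline{\KK}$ first, then descend, and prove uniqueness at the end. Let $q=p^r$.

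\emph{Step 1: a lifting in $\SL(V\otimes\overline{\KK})$.} Assume first that $\KK$ is algebraically closed. By Lemma~\ref{lemma: lifting finite-order projective element}, choose a lifting $A$ of $g$ with $A^{q}=\I_V$. Then $\det(A)^{q}=1$. Since $p\nmid N$, pick an integer $m$ with $mN\equiv 1\pmod{q}$ and put $\mu=\det(A)^{-m}$. The matrix $B=\mu A$ then satisfies
\[
\det B=\mu^N\det A=\det(A)^{1-mN}=1,\qquad B^{q}=\mu^{q}\I_V=\I_V.
\]
Its projective image is $g$, which has order $q$, so $B$ has order exactly $q$. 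Write $F\circ B=\zeta F$; then $\zeta^{q}=1$.

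\emph{Step 2: show $\zeta=1$.} This is the main point; Proposition~\ref{proposition: determinant equality} only treats the invariant case, so I redo its argument with a character. Since $p\mid d$, we have $\gcd(q,d-1)=1$. Apply Lemma~\ref{lemma: reduction to klein} to $H=\langle B\rangle$ with the character $\chi(B)=\zeta$. In suitable coordinates $B=\diag(a_1,\dots,a_N)$, and there is a K-pure $\widehat F=K_1+\cdots+K_t$ with $\widehat F\circ B=\zeta\widehat F$.

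On a Klein block $K_j$ in $n_j$ variables, comparing coefficients gives $a_i^{d-1}a_{i+1}=\zeta$, with indices read cyclically within the block. Multiplying these $n_j$ relations gives $\det(B_j)^d=\zeta^{n_j}$. Taking the product over all blocks yields
\[
1=\det(B)^d=\zeta^{\sum_j n_j}=\zeta^N.
\]
Together with $\zeta^{q}=1$ and $\gcd(N,q)=1$, this forces $\zeta=1$. Hence $B$ is an $F$-lifting of $g$ lying in $\SL$.

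\emph{Step 3: descent to a general $\KK$.} Let $\widetilde g$ be the element constructed over $\overline{\KK}$. Since $\gcd(q,N)=1$, Lemma~\ref{lemma: liftability when gcd order N=1} gives a lifting $C\in\GL(V)$ of $g$. Write $C=\nu\widetilde g$ with $\nu\in\overline{\KK}^\times$. Both matrices have order $q$, so $\nu^{q}=1$. Also $\nu^N=\det C\in\KK^\times$, and since $\gcd(N,q)=1$ this gives $\nu\in\KK^\times$. Therefore $\widetilde g=\nu^{-1}C\in\SL(V)$.

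\emph{Step 4: uniqueness.} Suppose $\widetilde g$ and $\widetilde g'$ are two $F$-liftings of $g$ in $\SL(V)$, and write $\widetilde g'=\lambda\widetilde g$. Both have order $q$, so $\lambda^{q}=1$. Both have determinant $1$, so $\lambda^N=1$. Since $\gcd(N,q)=1$, we get $\lambda=1$.

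The main obstacle is Step~2: it needs the semi-invariant form of the reduction-to-Klein method together with the determinant identity $\det(B_j)^d=\zeta^{n_j}$ on each Klein block.
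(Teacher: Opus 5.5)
Your proof is correct. It shares the paper's overall frame (reduction-to-Klein over $\overline{\KK}$, then descent, then uniqueness), but the key step goes differently.

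\textbf{The paper's key step.} The paper starts from a lifting $A$ of order $p^r$ with $F\circ A=\zeta F$. It first picks a Klein component $K_1$ whose number $k$ of variables is prime to $p$; this is possible because $p\nmid N$. It then invokes the explicit structure of the diagonal automorphism group of a Klein polynomial (Remark~\ref{remark: diagonal automorphisms of Klein polynomial}). That group is projectively cyclic of order $M/d$, which is prime to $p$ by a binomial congruence, and the case $d=2$ needs an odd block to be chosen. From this the paper concludes that $A$ acts on $K_1$ by a scalar $\xi$, and rescaling by $\xi^{-1}$ gives an $F$-preserving $B$. Only afterwards does it use Proposition~\ref{proposition: determinant equality} and an $m$-th root of unity $\omega$ with $\omega^N=\det B$ to move into $\SL(V)$.

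\textbf{Your key step.} You reverse the order. You normalize the determinant first, which needs only $p\nmid N$. You then kill the character with the single identity $\det(B)^d=\zeta^N$, obtained by multiplying all the coefficient relations $a_i^{d-1}a_{\sigma(i)}=\zeta$ of the K-pure model.

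\textbf{What each route buys.} Your route needs no block with $p\nmid k$, no binomial congruence, no appeal to the full structure of Klein diagonal automorphism groups, and no separate treatment of $d=2$. Your identity is the semi-invariant version of Proposition~\ref{proposition: determinant equality}, in effect the discriminant's transformation law, so it also subsumes that proposition. The paper's route additionally shows that $A$ acts by a scalar on every Klein block with $p\nmid k$, but the statement does not need this. Your descent and uniqueness steps match the paper's, except that you cite Lemma~\ref{lemma: liftability when gcd order N=1} instead of Lemma~\ref{lemma: liftability when gcd order N d=1}; either works here.
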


\begin{proof}
	We first prove the existence when $ \KK $ is algebraically closed. By
	Lemma \ref{lemma: lifting finite-order projective element}, choose
	$A\in\pi^{-1}(g)$ of order $p^r$, and write
	$F\circ A=\zeta F$. Since $p\mid d$, we have
	$\gcd(p^r,d-1)=1$. Applying Lemma \ref{lemma: reduction to klein} to $ H = \langle A \rangle $, we can choose coordinates $x_1,\ldots,x_N$ for $V$ in which $ A $ is diagonal and there is a K-pure
	polynomial
	\[
	\widehat F=K_1+\cdots+K_t
	\]
	such that $\widehat F\circ A=\zeta \widehat F$, where the $K_j$ involve
	mutually disjoint sets of variables. Since their numbers of variables
	sum to $N$ and $p\nmid N$, some $K_j$, say $K_1$, involves $k$
	variables with $p\nmid k$.

	Let $A_1$ be the restriction of $A$ to these variables and set
	$M=|1-(1-d)^k|$. Since
	\[
	\frac{1-(1-d)^k}{d}
	=\sum_{j=1}^k\binom{k}{j}(-d)^{j-1}
	\equiv k\pmod p,
	\]
	the integer $M/d$ is coprime to $p$.
	Since $\KK$ is algebraically closed, choose $\eta\in\KK^\times$ such
	that $\eta^d=\zeta^{-1}$. Then $\eta A_1$ preserves $K_1$ and has the
	same projective class as $A_1$. If $d=2$, then $p=2$, and $p\nmid k$
	implies that $k$ is odd, so the second part of Remark
	\ref{remark: diagonal automorphisms of Klein polynomial} applies; if
	$d\geq3$, its first part applies. Thus, by the same remark, the
	projective class of $A_1$ belongs to a cyclic group of order $M/d$.
	It also has $p$-power order, and is therefore trivial. Hence
	$A_1=\xi\I$ for some $ p^{r} $-th root of unity $\xi\in\KK^\times$, and $K_1\circ A_1=\zeta K_1$ gives
		$\xi^d=\zeta$. Thus $B=\xi^{-1}A$ satisfies $F\circ B=F$ and $B^{p^r}=1$. Since $\pi(B)=g$ has order $p^r$, the order of $B$ is exactly $p^r$. Therefore $B$ is an $F$-lifting of $g$.

	By Proposition \ref{proposition: determinant equality},
	$\det(B)^d=1$; moreover,
	$\det(B)^{p^r}=1$. Put $m=\gcd(d,p^r)$. Since
	$p\nmid N$, there exists an $ m $-th root of unity $\omega$ such that
		$\omega^N=\det(B)$. Then taking $ \widetilde{g} = \omega^{-1}B $, we have $ \det(\widetilde{g}) = \omega^{-N}\det(B) = 1 $. Furthermore, since $ m $ divides both $ p^{r} $ and $ d $, we have $ \widetilde{g}^{p^{r}} = B^{p^{r}} = \I_{V} $ and $ F \circ \widetilde{g} = F \circ B = F $. Its projective image has order $p^r$, so $\widetilde g$ also has order $p^r$. Thus $\widetilde{g}$ is an $F$-lifting of $g$ in $\SL(V)$.
	
		For general $ \KK $, we have proved that there exists an $ F $-lifting $ \widetilde{g} $ of $ g $ in $ \SL(V\otimes\overline{\KK}) $, where $ \overline{\KK} $ is the algebraic closure of $ \KK $. Since $ p \nmid N $, Lemma
		\ref{lemma: liftability when gcd order N d=1} gives a lifting
		$C\in\GL(V)$ of $g$. Write $C=\mu\widetilde g$ with
		$\mu\in\overline{\KK}^\times$. Since both $\mu\widetilde g$ and $\widetilde g$ have order $p^r$, we have $\mu^{p^r}=1$. As $\det(\widetilde g)=1$, we also have
		\[
		\mu^N=\det(C)\in\KK^\times.
		\]
		Since $ p \nmid N $, we have $ \mu \in \KK^{\times}$, which implies that $ \widetilde{g} \in \SL(V) $.
	
	If there is another such lifting $ \overline{g} $, write
	$\widetilde g=\lambda\overline g$. Then
	$\lambda^{p^r}=\lambda^N=1$. Since $p\nmid N$, we have
	$\lambda=1$.
\end{proof}

\section{Proof of Theorem \ref{main theorem: F-liftability and Sylow subgroups}}
\label{section: proof of main theorem}
Let $G$ be a $p$-subgroup of $\PGL(V)$. For convenience, we call a finite subgroup $\overline{G} \subseteq \pi^{-1}(G)$ a \textbf{coprime cover} of $ G $ if $\pi|_{\overline{G}}\colon\overline{G} \rightarrow G$ is surjective and $ p \nmid \frac{|\overline{G}|}{|G|} $. 

\begin{lem}
	\label{lemma: reduction to coprime cover}
	Let $\KK$ be an arbitrary field, let $V$ be a finite-dimensional
	$\KK$-vector space, let $p$ be a prime, and let $G$ be a $p$-subgroup
	of $\PGL(V)$. Then $G$ is liftable if and only if it admits a coprime
	cover. Moreover, if $F\in S^d(V^*)$ is a nonzero homogeneous form and
	$G\subseteq\Lin(F)$, then $G$ is $F$-liftable if and only if it admits
	an $F$-preserving coprime cover.
\end{lem}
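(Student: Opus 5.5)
The proof is a direct argument in each direction; the only real input is Sylow's theorem applied to the finite group $\overline{G}$.

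\emph{Forward directions.} If $\widetilde{G}$ is a lifting of $G$, then $\pi|_{\widetilde G}$ is an isomorphism onto $G$. So $\widetilde{G}$ is a finite subgroup of $\pi^{-1}(G)$ with $|\widetilde G|/|G|=1$, which is a coprime cover. If moreover $\widetilde G$ is an $F$-lifting, this cover is $F$-preserving.

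\emph{Reverse directions.} Let $\overline{G}\subseteq\pi^{-1}(G)$ be a coprime cover, and write $|G|=p^r$. The kernel of $\pi|_{\overline G}$ is $Z=\overline G\cap \KK^\times\I_V$, and $|Z|=|\overline G|/|G|$ is prime to $p$. Hence $|\overline G|=p^r|Z|$ with $p\nmid |Z|$. By Sylow's theorem, $\overline G$ has a subgroup $P$ of order $p^r$.

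Since $P$ is a $p$-group and $|Z|$ is prime to $p$, we have $P\cap Z=\{1\}$. So $\pi|_P$ is injective. Its image has order $p^r=|G|$ and lies in $G$, so $\pi|_P\colon P\to G$ is an isomorphism. Thus $P$ is a lifting of $G$.

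If $\overline G$ is $F$-preserving, then every element of $P\subseteq\overline G$ satisfies $F\circ\varphi=F$. Hence $P$ is an $F$-lifting of $G$.

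There is no substantive obstacle here. The only points to check are that the kernel of $\pi|_{\overline G}$ is exactly the scalar subgroup $Z$ of order $|\overline G|/|G|$, and that a $p$-group meets a subgroup of order prime to $p$ trivially. Both follow from Lagrange's theorem.
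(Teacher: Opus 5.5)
Your proposal is correct and follows essentially the same approach as the paper: a lifting (respectively, an $F$-lifting) is trivially a coprime cover (respectively, an $F$-preserving one). Conversely, a Sylow $p$-subgroup of a coprime cover has order $|G|$, meets the prime-to-$p$ scalar kernel trivially, and therefore maps isomorphically onto $G$, inheriting the $F$-preserving property.
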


\begin{proof}
	Every lifting is a coprime cover, and every $F$-lifting is an
	$F$-preserving coprime cover. Conversely, let $\overline G$ be a
	coprime cover and put $K=\ker(\pi|_{\overline G})$. If
	$\widetilde G$ is a Sylow $p$-subgroup of $\overline G$, then
	$p\nmid|K|$ and
	\[
	|\widetilde G|=|G|,
	\qquad \widetilde G\cap K=\{\I_V\}.
	\]
	Thus $\pi|_{\widetilde G}\colon\widetilde G\to G$ is an isomorphism,
	so $\widetilde G$ is a lifting of $G$. If $\overline G$ is
	$F$-preserving, then so is $\widetilde G$, and hence it is an
	$F$-lifting of $G$.
\end{proof}

\begin{lem}
	\label{lemma: construction of finite cover for coprime cases}
		Let $\KK$ be an arbitrary field of characteristic zero, let $V$ be an $N$-dimensional $\KK$-vector space, and let $F\in S^d(V^*)$ be a nonsingular homogeneous form of degree $d\geq1$. Let $p$ be a prime with $p \nmid \gcd(N,d)$. Then any $p$-subgroup of $\Lin(F)$ admits an $ F $-preserving coprime cover.
		In particular, any $p$-subgroup of $\Lin(F)$ is $F$-liftable.
\end{lem}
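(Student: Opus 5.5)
Split into the two cases $p\nmid d$ and $p\mid d$, $p\nmid N$. In each case the goal is a finite $F$-preserving subgroup $\overline G\subseteq\pi^{-1}(G)$ mapping onto $G$ whose kernel $\overline G\cap\KK^\times\I_V$ has order prime to $p$. The "in particular" then follows from Lemma~\ref{lemma: reduction to coprime cover}.

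\emph{Case $p\nmid d$.} Take
\[
\overline G=\{A\in\pi^{-1}(G)\mid F\circ A=F,\ A \text{ of finite order}\}.
\]
Since scalars $\lambda\I_V$ with $F\circ(\lambda\I_V)=F$ satisfy $\lambda^d=1$, this is $\pi^{-1}(G)\cap\{F\circ A=F\}$. If $F\circ A=F$ and $\pi(A)$ has finite order, then $A^{|G|}$ is a scalar $\mu$ with $\mu^d=1$, so $A$ has finite order. Hence the set is a group, and it is finite, of order at most $d|G|$. Surjectivity comes from Proposition~\ref{proposition: F-liftability when order coprime to d}: every $g\in G$ has $p$-power order, prime to $d$, so it has an $F$-lifting. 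The kernel is contained in $\mu_d(\KK)$, whose order divides $d$ and is therefore prime to $p$. So $\overline G$ is an $F$-preserving coprime cover.

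\emph{Case $p\mid d$, $p\nmid N$.} Take
\[
\overline G=\{A\in\pi^{-1}(G)\mid F\circ A=F,\ \det A=1\}.
\]
This is a group. Its kernel consists of scalars $\lambda$ with $\lambda^d=\lambda^N=1$, so its order divides $\gcd(d,N)$, which is prime to $p$ because $p\nmid N$. Finiteness follows because the kernel is finite and the image lies in the finite group $G$. Surjectivity is exactly Proposition~\ref{proposition: F-liftability when p mid d and p nmid N}, applied to each $g\in G$ of order $p^r$ with $r\ge1$; the identity lifts trivially. Characteristic zero and nonsingularity of $F$ enter only through that proposition.

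The main obstacle is the second case, and it has already been absorbed into Proposition~\ref{proposition: F-liftability when p mid d and p nmid N}, which produces $F$-liftings of determinant one. What remains is bookkeeping. Imposing both $F\circ A=F$ and $\det A=1$ forces the kernel into $\mu_{\gcd(N,d)}$, which is exactly what makes the cover coprime. Imposing only $F$-preservation would not suffice here, since that kernel $\mu_d$ could have order divisible by $p$.
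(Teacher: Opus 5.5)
Your proposal is correct and matches the paper's proof: it uses the same two cases, the same covers $\{A\in\pi^{-1}(G)\mid F\circ A=F\}$ and its intersection with $\SL(V)$, surjectivity from Propositions~\ref{proposition: F-liftability when order coprime to d} and~\ref{proposition: F-liftability when p mid d and p nmid N}, and kernels of order dividing $d$ and $\gcd(N,d)$ respectively. Your extra finite-order condition in the first case is harmless, since, as you observe, it holds automatically; the paper omits it.
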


\begin{proof}
Let $G$ be a $p$-subgroup of $\Lin(F)$. Suppose first that $p\nmid d$, and set
	\[
	\overline G
	=\{A\in\pi^{-1}(G)\mid F\circ A=F\}.
	\]
	This is an $F$-preserving subgroup of $\GL(V)$. Proposition
		\ref{proposition: F-liftability when order coprime to d} shows that
	$\pi|_{\overline G}$ is surjective, and
	\[
	\ker(\pi|_{\overline G})
	=\{\lambda\I_V\mid \lambda^d=1,\ \lambda\in\KK^\times\}.
	\]
	Thus the kernel is finite of order dividing $d$. Hence $\overline G$
	is finite and $|\overline G|/|G|$ is coprime to $p$, so $\overline G$
	is an $F$-preserving coprime cover of $G$.
	
	Now suppose that $p\mid d$. Then $p\nmid N$, and we set
	\[
	\overline G
	=\{A\in\pi^{-1}(G)\cap\SL(V)\mid F\circ A=F\}.
	\]
	By Proposition
	\ref{proposition: F-liftability when p mid d and p nmid N}, the map
	$\pi|_{\overline G}$ is surjective. Its kernel is
	\[
	\ker(\pi|_{\overline G})
	=\{\lambda\I_V\mid \lambda^d=\lambda^N=1,
	\ \lambda\in\KK^\times\}.
	\]
	This kernel is finite of order dividing $\gcd(N,d)$, which is coprime
	to $p$ because $p\nmid N$. It follows again that $\overline G$ is an
	$F$-preserving coprime cover of $G$.
	In either case, Lemma~\ref{lemma: reduction to coprime cover} shows
	that $G$ is $F$-liftable.
\end{proof}

We are now ready to prove the main Theorem~\ref{main theorem: F-liftability and Sylow subgroups}. 

\begin{proof}[Proof of Theorem~\ref{main theorem: F-liftability and Sylow subgroups}]
		If $G$ is $F$-liftable, then every subgroup of $G$ is
		$F$-liftable. In particular, $G$ has an $F$-liftable Sylow
		$p$-subgroup for every prime $p$ dividing $\gcd(|G|,N,d)$.
		
		Conversely, suppose that $G$ has an $F$-liftable Sylow
		$p$-subgroup for every prime $p$ dividing $\gcd(|G|,N,d)$. Let $p$ be
		any prime dividing $|G|$. If $p\mid\gcd(N,d)$, the required
		$F$-liftable Sylow $p$-subgroup is provided by the hypothesis. If
		$p\nmid\gcd(N,d)$, every $p$-subgroup of $\Lin(F)$ is $F$-liftable
		by Lemma~\ref{lemma: construction of finite cover for coprime cases}. Thus $G$ has an
		$F$-liftable Sylow $p$-subgroup for every prime $p$ dividing
		$|G|$. Theorem~\ref{theorem: F-liftability and Sylow subgroups}
		then implies that $G$ is $F$-liftable.
\end{proof}

\begin{rmk}
	\label{remark: recall [GALM] and [OY]}
	Over $\CC$, Theorem~\ref{main theorem: F-liftability and Sylow subgroups}
	recovers the "if part" of
	\cite[Theorem 3.5]{gonzalez2020lift} when $\gcd(N,d)=1$. If $d=p$
	is prime, it also strengthens
	\cite[Theorem 4.8]{oguiso2019quintic} by removing its additional condition (2).
\end{rmk}

	\bibliography{reference}
\end{document}